\newtheorem{definition}{\bf Definition}
\newtheorem{assumption}{\bf Assumption}
\newtheorem{theorem}{\bf Theorem}
\newtheorem{lemma}{\bf Lemma}
\newtheorem{proposition}{\bf Proposition}
\newtheorem{corollary}{\bf Corollary}
\begin{document}

\title{Decentralized Optimization on Compact Submanifolds by Quantized Riemannian Gradient Tracking}

\author{Jun Chen$^*$, Lina Liu$^*$, Tianyi Zhu, Yong Liu,~\IEEEmembership{Member,~IEEE}, Guang Dai, Yunliang Jiang, \\ Ivor W.~Tsang,~\IEEEmembership{Fellow,~IEEE}
\thanks{This research was supported by National Natural Science Foundation of China (Grant No. U22A20102), Zhejiang Provincial Natural Science Foundation of China (Grant No. LQN25F030018). \textit{Corresponding authors: Jun Chen and Yunliang Jiang}.}
\thanks{Jun Chen is with the National Special Education Resource Center for Children with Autism, Zhejiang Normal University, Hangzhou 311231, China, and with the School of Computer Science and Technology, Zhejiang Normal University, Jinhua 321004, China (E-mail: junc.change@gmail.com).}
\thanks{Lina Liu and Tianyi Zhu are with China Mobile Research Institute, Beijing 100032, China. (E-mail: liulina0601@gmail.com; zhu-ty@outlook.com).}
\thanks{Yong Liu is with the Institute of Cyber-Systems and Control, Zhejiang University, Hangzhou 310027, China (E-mail: yongliu@iipc.zju.edu.cn).}
\thanks{Guang Dai is with SGIT AI Lab, State Grid Corporation of China, China (E-mail: guang.gdai@gmail.com).}
\thanks{Yunliang Jiang is with the Zhejiang Key Laboratory of Intelligent Education Technology and Application, Zhejiang Normal University, Jinhua 321004, China, and with the School of Computer Science and Technology, Zhejiang Normal University, Jinhua 321004, China, and also with the School of Information Engineering, Huzhou University, Huzhou 313000, China (E-mail: jyl2022@zjnu.cn).}
\thanks{Ivor W.~Tsang is with the Centre for Frontier Artificial Intelligence Research, A*STAR, Singapore (E-mail: ivor.tsang@gmail.com).}
\thanks{$^{*}$Equal contribution.}
}

\markboth{IEEE TRANSACTIONS ON SIGNAL PROCESSING}
{Shell \MakeLowercase{\textit{et al.}}: Bare Demo of IEEEtran.cls for IEEE Journals}
\maketitle

\begin{abstract}
This paper considers the problem of decentralized optimization on compact submanifolds, where a finite sum of smooth (possibly non-convex) local functions is minimized by $n$ agents forming an undirected and connected graph. However, the efficiency of distributed optimization is often hindered by communication bottlenecks. To mitigate this, we propose the Quantized Riemannian Gradient Tracking (Q-RGT) algorithm, where agents update their local variables using quantized gradients. The introduction of quantization noise allows our algorithm to bypass the constraints of the accurate Riemannian projection operator (such as retraction), further improving iterative efficiency. To the best of our knowledge, this is the first algorithm to achieve an $\mathcal{O}(1/K)$ convergence rate in the presence of quantization, matching the convergence rate of methods without quantization. Additionally, we explicitly derive lower bounds on decentralized consensus associated with a function of quantization levels. Numerical experiments demonstrate that Q-RGT performs comparably to non-quantized methods while reducing communication bottlenecks and computational overhead. 
\end{abstract}

\begin{IEEEkeywords}
Distributed optimization, Quantization, Compact submanifolds
\end{IEEEkeywords}

\IEEEpeerreviewmaketitle

\section{Introduction}

\IEEEPARstart{I}{n} large-scale systems such as signal processing, control, and machine learning, data is often distributed across numerous nodes, making it challenging for a centralized server to manage the increasing computational demands. As a result, 
distributed optimization has become increasingly important in recent years due to the challenges posed by large-scale datasets and complex multi-agent systems. Among the explored approaches, the distributed sub-gradient method is notable for its simplicity in combining local gradient descent to reduce the consensus error~\cite{nedic2009distributed,yuan2016convergence}. To achieve exact convergence to an $\epsilon$-stationary point, various algorithms have leveraged both local and global historical information. For instance, the gradient tracking algorithm~\cite{qu2017harnessing,yuan2018exact}, primal-dual framework~\cite{alghunaim2020decentralized}, and ADMM~\cite{shi2014linear,aybat2017distributed} have been explored with convex local functions.

\renewcommand\arraystretch{1.2} 
\begin{table}[t]
	\caption{Comparison with existing algorithms.}
	\begin{center}
		\begin{tabular}{ccccc}
			\hline
			  Methods & GT & Operator & Conv. rate  & Compression \\
			\hline
            DRDGD~\cite{chen2021decentralized}  & \XSolidBrush & Retraction & $\mathcal{O}\left(\frac{1}{\sqrt{K}}\right)$ & \XSolidBrush \\
            DPRGD~\cite{deng2023decentralized} & \XSolidBrush & Projection & $\mathcal{O}\left(\frac{1}{\sqrt{K}}\right)$ & \XSolidBrush \\
            DRCGD~\cite{chen2024decentralized} & \XSolidBrush & Projection & / & \XSolidBrush \\
            \hline
            RGT~\cite{chen2021decentralized} & \CheckmarkBold & Retraction & $\mathcal{O}\left(\frac{1}{K}\right)$ & \XSolidBrush \\
            RFIM~\cite{hu2023decentralized} & \CheckmarkBold & Retraction & $\mathcal{O}\left(\frac{1}{K}\right)$ & \XSolidBrush \\
            SRGT~\cite{zhao2024distributed} & \CheckmarkBold & Retraction & $\mathcal{O}\left(\frac{1}{K}\right)$ & \XSolidBrush \\
            PRGT~\cite{deng2023decentralized} & \CheckmarkBold & Projection & $\mathcal{O}\left(\frac{1}{K}\right)$  & \XSolidBrush \\
            PRGC~\cite{hu2024improving} & \CheckmarkBold & Projection & $\mathcal{O}\left(\frac{1}{K}\right)$  & \CheckmarkBold \\
            RFGT~\cite{sun2024global} & \CheckmarkBold & N/A & $\mathcal{O}\left(\frac{1}{K}\right)$  & \XSolidBrush \\
            Q-RGT (\textbf{Ours}) & \CheckmarkBold & N/A  & $\mathcal{O}\left(\frac{1}{K}\right)$  & \CheckmarkBold \\
			\hline
		\end{tabular}
	\end{center}
\label{summary}
\end{table}

Let $\mathcal{M}$ be a compact submanifold of $\mathbb{R}^{d\times r}$. We consider the following distributed non-convex (but smooth) optimization problem on a compact submanifold: 
\begin{equation}
\begin{aligned}
&\min \frac{1}{n} \sum_{i=1}^n f_i\left(x_i\right), \\
&\text { s.t. }  x_1=x_2=\cdots=x_n, \quad x_i \in \mathcal{M}, \quad \forall i \in [n],
\end{aligned}
\label{decentralized}
\end{equation}
where $n$ represents the number of agents, and $f_i$ denotes the local function for each agent. Many important large-scale tasks can be formulated as this distributed optimization problem, such as the principle component analysis~\cite{ye2021deepca}, eigenvalue estimation~\cite{chen2021decentralized}, dictionary learning~\cite{raja2015cloud}, and deep neural networks with orthogonal constraint~\cite{vorontsov2017orthogonality,huang2018orthogonal,eryilmaz2022understanding}.

Despite the recent advancements in distributed optimization on the Stiefel manifold, the issue of communication bottlenecks in this context has received limited attention. Recently, Ref. \cite{hu2024improving} reduced the communication quantities (total number of entries communicated) in all iterations through single-step consensus. This raises another question: Can we design a decentralized Riemannian algorithm to reduce the communication bottleneck and computational overhead through low-precision gradients? In this paper, we propose the Quantized Riemannian Gradient Tracking (Q-RGT) algorithm to efficiently address problem (\ref{decentralized}) over connected networks. A comparison of our work with the existing methods is provided in Table~\ref{summary}. Our contributions can be summarized as follows: 

\begin{enumerate}
    \item Building upon the landing algorithm, we design a novel quantization scheme that rounds Riemannian gradients up or down based on the direction toward the manifold, thereby achieving the manifold constraint.
    \item We propose Q-RGT, a novel quantized distributed algorithm for solving problem (\ref{decentralized}). The algorithm is free from retraction and projection, and iterates within the neighborhood of compact submanifolds.
    \item We establish that Q-RGT achieves a convergence rate of $\mathcal{O}(1/K)$, matching the performance of its retraction-based and non-quantized counterparts. This is the first convergence result in the presence of quantization.
    \item Numerical experiments demonstrate that Q-RGT performs on par with existing methods while significantly reducing communication bottlenecks and computational overhead.
\end{enumerate}

\section{Related Works}

\subsection{Decentralized Optimization}

Initially, Ref. \cite{tsitsiklis1986distributed} introduced an intuitive distributed gradient descent algorithm (DGD) in Euclidean space. To ensure consensus, Ref. \cite{nedic2009distributed} proposed a distributed sub-gradient algorithm for convex problems, which accommodates time-varying and asynchronous communication by requiring a diminishing step-size. Later, Ref. \cite{chen2021distributed} extended this idea by introducing a distributed stochastic sub-gradient descent algorithm for weakly convex problems. To achieve exact convergence with constant step-size, Ref. \cite{shi2015extra} developed a first-order distributed algorithm (EXTRA) for convex Lipschitz-differentiable objectives, whose iteration complexity is comparable to DGD. Ref. \cite{di2016next} utilized a dynamic consensus mechanism to distribute the computation of non-convex problems among agents. Furthermore, Ref. \cite{qu2017harnessing} introduced the gradient tracking method into the framework of DGD, achieving a linear convergence rate for strongly convex objectives. More recently, Ref. \cite{sun2022centralized} developed the distributed mirror descent algorithm, which accelerates convergence in convex settings.

\subsection{Decentralized Optimization on the Stiefel Manifold}

Due to the non-convex nature of the Stiefel manifold, the aforementioned results can not be directly applied in this setting. As a result, existing studies have further developed Riemannian optimization tools. Based on the local linear convergence rate of distributed Riemannian consensus \cite{chen2023local}, Ref. \cite{chen2021decentralized} introduced a retraction-based distributed Riemannian gradient descent method (DRDGD) with a convergence rate of $\mathcal{O}(1/\sqrt{K})$, achieved by using a diminishing step-size. Additionally, Ref. \cite{chen2021decentralized} proposed a Riemannian gradient tracking algorithm (RGT) with a convergence rate of $\mathcal{O}(1/K)$ using a constant step-size. Building on this, Ref. \cite{deng2023decentralized} proposed a projection-based distributed Riemannian gradient method (DPRGD), which has the same convergence rate as DRDGD. A gradient tracking variant (PRGT) was also established, matching the convergence rate of RGT. Subsequently, Ref. \cite{hu2023decentralized} utilized the Riemannian Fisher information matrix (RFIM) to develop a second-order distributed Riemannian natural gradient algorithm, which achieves the best-known convergence rate of $\mathcal{O}(1/K)$ to a stationary point. By combining a variable sample scheme with gradient tracking, Ref. \cite{zhao2024distributed} introduced a retraction-based stochastic gradient tracking method (SRGT), achieving a convergence rate of $\mathcal{O}(1/K)$ in expectation when the sample size increases exponentially. More recently, Ref. \cite{chen2024decentralized} proposed a projection-based distributed Riemannian conjugate gradient method (DRCGD). Since the centralized conjugate gradient algorithm has not yet proven its convergence rate, DRCGD only guarantees global convergence. Additionally, Ref. \cite{wang2022decentralized} achieved a single round of communications by introducing an approximate augmented Lagrangian penalty function. However, as noted in Ref. \cite{ablin2022fast}, a key drawback of penalty methods is that the obtained solution is not generally feasible. To address this limitation, Ref. \cite{sun2024global} proposed a retraction-free gradient tracking algorithm (RFGT) to achieve the global convergence through the landing algorithm \cite{ablin2022fast}.

\subsection{Decentralized Optimization via Quantized Communication}

The limited communication bandwidth among agents is a significant bottleneck in decentralized optimization. To address this, various quantized distributed algorithms have been developed to reduce communication costs. For instance, Ref. \cite{el2016design} proposed a deterministic distributed averaging algorithm with uniform quantization. To achieve vanishing consensus error, Ref. \cite{reisizadeh2019exact} introduced a novel quantized distributed gradient decent algorithm (QDGD) for strongly convex and smooth objectives, in which each node transmits a quantized local decision variable to neighboring nodes under the assumption of unbiasedness. For distributed stochastic learning over a directed graph, Ref. \cite{taheri2020quantized} proposed a quantized algorithm based on the push-sum algorithm for convex and non-convex objectives. Recently, efforts have been made to accelerate the convergence of communication-efficient algorithms. Ref. \cite{kovalev2021linearly} introduced a randomized first-order algorithm with the variance reduction technique, achieving linear convergence for strongly convex distributed problems. Inspired by primal-dual methods, Ref. \cite{liu2021linear} presented a linearly convergent distributed algorithm with communication compression. Moreover, Ref. \cite{xiong2022quantized} developed a quantized distributed gradient tracking algorithm that achieves linear convergence even with one-bit communication. However, communication bottlenecks on compact submanifolds have received limited attention.

\section{Preliminaries}
\subsection{Notations}

\renewcommand\arraystretch{1.2}  
\begin{table}[h]
    \centering
    \caption{Symbol notations}
    \begin{tabular}{c|c}
        \hline
        Symbol & Notation \\
        \hline
        $\mathcal{M}$ & compact submanifold \\
        \hline
        $\mathcal{M}^{\Omega(N)}$ & quantized region \\
        \hline
        $N$ & quantized bit-width \\
        \hline
        $\mathcal{R}_{x}$ & retraction \\
        \hline
        $\mathcal{P}_{T_x \mathcal{M}}$ & orthogonal projection onto $T_x \mathcal{M}$ \\
        \hline
        $T_x \mathcal{M}$, $N_x \mathcal{M}$ & tangent space and normal space of $\mathcal{M}$ at $x$ \\
        \hline
        $\nabla f(x)$, $\operatorname{grad} f(x)$ & Euclidean gradient and Riemannian gradient of $f$ \\
        \hline
        $\operatorname{dist}(x,\mathcal{M})$ & Euclidean distance between $x$ and $\mathcal{M}$ \\
        \hline
        $\Vert \cdot \Vert$ & Frobenius norm \\
        \hline
    \end{tabular}
    \label{sym}
\end{table}

The undirected and connected graph $G=(\mathcal{V},\mathcal{E})$ is defined by the set of agents $\mathcal{V}=\{1,2,\cdots,n\}$ and the set of edges $\mathcal{E}$. The adjacency matrix $W$ of this topological graph satisfies $W_{ij}>0$ and $W_{ij}=W_{ji}$ if there exists an edge $(i,j) \in \mathcal{E}$; otherwise, $W_{ij}=0$. We represent the collection of all local variables $x_i$ as $\mathbf{x}$, stacking them as $\mathbf{x}^\top:=(x_1^\top, \cdots, x_n^\top)$. The function $f(\mathbf{x})$ is defined as the average of local functions: $\frac{1}{n} \sum_{i=1}^n f_i(x_i)$. We use $\emph{I}_d$ to represent the $d \times d$ identity matrix and $\textbf{1}_n \in \mathbb{R}^n$ as a vector with all entries equal to one. Furthermore, we define $\mathbf{W}^t:=W^t \otimes \emph{I}_d$, where $\otimes$ denotes the Kronecker product and $t$ is a positive integer. The symbols are summarized in Table \ref{sym}.

\subsection{Consensus Problem over Networks}

Let $x_i$ represent the local variables of each agent $i \in [n]$. Then, the Euclidean average point of $x_1, x_2, \cdots, x_n$ is denoted by
\begin{equation}
    \bar{x}:=\frac{1}{n} \sum_{i=1}^n x_i .
\end{equation}
In Euclidean space, the consensus error can be measured by $\Vert x_i - \bar{x}\Vert$.
It is important to address the consensus problem, which involves minimizing the following quadratic loss function:
\begin{equation}
\begin{aligned}
&\min \varphi^t(\mathbf{x}):=\frac{1}{4} \sum_{i=1}^n \sum_{j=1}^n W_{i j}^t\left\|x_i-x_j\right\|^2, \\
&\text { s.t. }  x_i, x_j \in \mathcal{M}, \quad \forall i,j \in[n] \text {, } \\
\end{aligned}
\label{consensus}
\end{equation}
where the positive integer $t$ represents the $t$-th power of $W$. Note that the doubly stochastic matrix $W_{ij}^t$ satisfies the following assumption and is calculated by performing $t$ communication steps.

\begin{assumption}
(Connectivity) We assume that for the undirected and connected graph $G$, the doubly stochastic matrix $W$ satisfies: (a)
$W = W^\top$; (b) $0 < W_{ii} < 1$ and $W_{ij} \geq 0$; (c) the eigenvalues of $W$ lie in $(-1, 1]$. In addition, the second largest singular value $\sigma_2$ of $W$ falls within $(0, 1)$.
\label{weight}
\end{assumption}

\subsection{Riemannian Optimization}

In order to develop the Riemannian version of decentralized gradient descent, we introduce a retraction operator based on the following definition to update along a negative Riemannian gradient direction on the tangent space $T_x \mathcal{M}$.

\begin{definition}
(Retraction \cite{absil2008optimization}) A smooth map $\mathcal{R}: T \mathcal{M}  \rightarrow \mathcal{M}$ is called a retraction on a smooth manifold $\mathcal{M}$ if the retraction of $\mathcal{R}$ to the tangent space $T_x \mathcal{M}$ at any point $x \in \mathcal{M}$, denoted by $\mathcal{R}_x$, satisfies the following conditions: (a) $\mathcal{R}$ is continuously differentiable; (b) $\mathcal{R}_x(0_x)=x$, where $0_x$ is the zero element of $T_x \mathcal{M}$;
(c) $\mathrm{D} \mathcal{R}_x(0_x)=\operatorname{id}_{T_x \mathcal{M}}$, the identity mapping.
\label{retraction}
\end{definition}

When the second-order boundedness of retraction is satisfied, it means
\begin{equation}
    \mathcal{R}_x(\eta)=x+\eta +\mathcal{O}(\Vert \eta \Vert^2).
\label{second_order}
\end{equation}
In this case, $\mathcal{R}_x(\eta)$ is a local approximation of $x+\eta$. Consequently, the decentralized Riemannian gradient descent \cite{chen2021decentralized} iterates as
\begin{equation}
    x_{i,k+1}=\mathcal{R}_{x_{i,k}}\left(\mathcal{P}_{T_{x_{i,k}} \mathcal{M}}\left(\sum_{j=1}^n W^t_{ij}x_{j,k}\right) - \alpha \operatorname{grad} f(x_{i,k})\right)
\end{equation}
where $\mathcal{P}_{T_x \mathcal{M}}(\cdot)$ is the orthogonal projection onto $T_x \mathcal{M}$~\cite{edelman1998geometry,absil2008optimization}. On the Stiefel manifold $\operatorname{St}(d,r)=\{x \in \mathbb{R}^{d \times r}|x^\top x = \emph{I}_r\}$ where $r\leq d$~\cite{zhu2017riemannian,sato2022riemannian}, it follows from $y \in \mathbb{R}^{d \times r}$ that
\begin{equation}
\mathcal{P}_{T_x \mathcal{M}}(y) = y - \frac{1}{2} x (x^\top y + y^\top x).
\label{projection}
\end{equation}

The Riemannian gradient on $\mathcal{M}$, utilizing the induced Riemannian metric induced by the Euclidean inner product $\langle \cdot,\cdot \rangle$, can be expressed as
\begin{equation}
\operatorname{grad} f(x)= \mathcal{P}_{T_x \mathcal{M}}(\nabla f(x)).
\end{equation}
We define the distance between $x \in \mathbb{R}^{d\times r}$ and $\mathcal{M}$ by
\begin{equation}
\operatorname{dist}(x,\mathcal{M}):=\inf_{y \in \mathcal{M}} \Vert y-x\Vert.
\end{equation}
Since any compact $C^2$-submanifolds in Euclidean space belong to proximally smooth set \cite{clarke1995proximal}, we define an $R$-proximally smooth set $\mathcal{M}$ that satisfies
\begin{equation}
    \langle v, y-x\rangle \leq \frac{\|v\|}{2R}\|y-x\|^2 , \quad \forall x, y \in \mathcal{M}, v \in N_x \mathcal{M} .
\label{inner}
\end{equation}
The Stiefel manifold $\operatorname{St}(d,r)$ exhibits $1$-proximal smoothness, whereas the Grassmann manifold $\operatorname{Gr}(d,r)$ is $1/\sqrt{2}$-proximal smoothness \cite{balashov2021gradient}.

\subsection{Lipschitz Assumption}

We assume that the global objective function is Lipschitz smooth \cite{zhang2016first}, as well as the local objective function.
\begin{assumption}
\label{lipschitz}
(Lipschitz Smoothness) We assume that $f(x)$ is differentiable and $L$-Lipschitz smooth on $\mathbb{R}^{d\times r}$, i.e., for all $x,y \in \mathbb{R}^{d\times r}$, we have
\begin{equation}
    f_i(y) \leq f_i(x) + \langle \nabla f_i(x), y-x \rangle + \frac{L}{2} \Vert y-x \Vert^2 , \quad i \in [n].
\end{equation}
\end{assumption}
With Assumption \ref{lipschitz} in Euclidean space and the properties of proximally smooth sets, we can derive Lipschitz smooth on the manifold. Given Proposition \ref{pro1} on the manifold, the Riemannian gradient has the upper bound, i.e., $\Vert\operatorname{grad}f_i(x)\Vert \leq L_g$.

\begin{proposition}
Under Assumption~\ref{lipschitz}, for any $x,y \in \mathcal{M}$, if $f(x)$ is $L$-Lipschitz smooth in Euclidean space, there exists a constant $L_g=L+L_f$ such that
\begin{equation}
    f_i(y) \leq f_i(x) + \langle \operatorname{grad} f_i(x), y-x \rangle + \frac{L_g}{2} \Vert y-x \Vert^2 ,
\end{equation}
where $L_f:= \frac{1}{R} \max_{x \in \mathcal{M}}\Vert \nabla f_i(x) \Vert$.
\label{pro1}
\end{proposition}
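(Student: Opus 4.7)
The plan is to bridge the Euclidean Lipschitz-type bound given in Assumption~\ref{lipschitz} with the Riemannian bound stated in the proposition by controlling the normal component of the Euclidean gradient using the proximal smoothness inequality in~(\ref{inner}). Since $\operatorname{grad} f_i(x) = \mathcal{P}_{T_x\mathcal{M}}(\nabla f_i(x))$, I would begin by writing the orthogonal decomposition $\nabla f_i(x) = \operatorname{grad} f_i(x) + v_i(x)$, where $v_i(x) \in N_x\mathcal{M}$ is the normal component. Pythagoras then gives $\|v_i(x)\| \leq \|\nabla f_i(x)\| \leq \max_{x \in \mathcal{M}}\|\nabla f_i(x)\| = R\cdot L_f$.

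Next, I would start from the Euclidean descent inequality in Assumption~\ref{lipschitz} and split the linear term as
\begin{equation*}
\langle \nabla f_i(x), y-x\rangle = \langle \operatorname{grad} f_i(x), y-x\rangle + \langle v_i(x), y-x\rangle .
\end{equation*}
Because $v_i(x)\in N_x\mathcal{M}$ and both $x,y \in \mathcal{M}$, the proximal smoothness bound~(\ref{inner}) applies to the normal piece and yields
\begin{equation*}
\langle v_i(x), y-x\rangle \leq \frac{\|v_i(x)\|}{2R}\|y-x\|^2 \leq \frac{L_f}{2}\|y-x\|^2 .
\end{equation*}

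Plugging this estimate back into the Euclidean Lipschitz inequality and collecting the two quadratic terms gives
\begin{equation*}
f_i(y) \leq f_i(x) + \langle \operatorname{grad} f_i(x), y-x\rangle + \frac{L+L_f}{2}\|y-x\|^2,
\end{equation*}
which is the claimed bound with $L_g = L + L_f$. The only non-routine step is the normal-component control, and that step is exactly the content of the proximal smoothness hypothesis~(\ref{inner}), so no subtle obstacle arises. A minor point to handle carefully is the definition of $L_f$: the bound $\max_{x\in\mathcal{M}}\|\nabla f_i(x)\|$ is finite because $\mathcal{M}$ is compact and $\nabla f_i$ is continuous (being Lipschitz), which legitimizes the use of $L_f$ as a uniform constant independent of $x$.
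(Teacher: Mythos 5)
Your proof is correct and follows essentially the same route as the paper's: decompose $\nabla f_i(x)$ into tangential and normal parts, control the normal-component term $\langle v_i(x), y-x\rangle$ via the $R$-proximal smoothness inequality~(\ref{inner}), and fold it into the Euclidean Lipschitz bound to get the constant $L_g = L + L_f$. The extra remarks you add (the Pythagorean bound $\|v_i(x)\|\leq\|\nabla f_i(x)\|$ and the finiteness of the maximum by compactness) are implicit in the paper's argument and do not change the approach.
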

\begin{proof}
Since $\operatorname{grad} f_i(x)=\mathcal{P}_{T_x \mathcal{M}}(\nabla f_i(x))$, we have
\begin{equation}
    \langle \operatorname{grad} f_i(x), y-x \rangle  = \langle \nabla f_i(x), y-x \rangle - \langle \mathcal{P}_{N_x \mathcal{M}} (\nabla f_i(x)), y-x \rangle
\end{equation}
where the projection satisfies $\mathcal{P}_{T_x \mathcal{M}} (\nabla f_i(x)) = \nabla f_i(x) - \mathcal{P}_{N_x \mathcal{M}} (\nabla f_i(x))$.
It follows from Assumption~\ref{lipschitz} that
\begin{equation}
\begin{aligned}
    & f_i (y) - f_i(x) - \langle \operatorname{grad} f_i(x), y-x \rangle \\
    & = f_i (y) - f_i(x) - \langle \nabla f_i(x), y-x \rangle \\
    & + \langle \mathcal{P}_{N_x \mathcal{M}} (\nabla f_i(x)), y-x \rangle \\
    & \leq \frac{L}{2} \Vert y-x\Vert^2 + \frac{\Vert\mathcal{P}_{N_x \mathcal{M}} (\nabla f_i(x))\Vert}{2R} \Vert y-x \Vert^2 \\
    & \leq \left(\frac{L}{2}+\frac{1}{2R} \max_{x \in \mathcal{M}}\Vert \nabla f_i(x) \Vert\right) \Vert y-x \Vert^2 \\
    & \leq \frac{L + L_f}{2} \Vert y-x \Vert^2 ,
\end{aligned}
\end{equation}
where the first inequality utilizes Eq.(\ref{inner}). The proof is completed.
\end{proof}

\section{Quantized Riemannian Gradient Tracking}

\begin{figure}[ht]
\centering
\includegraphics[width=1\linewidth]{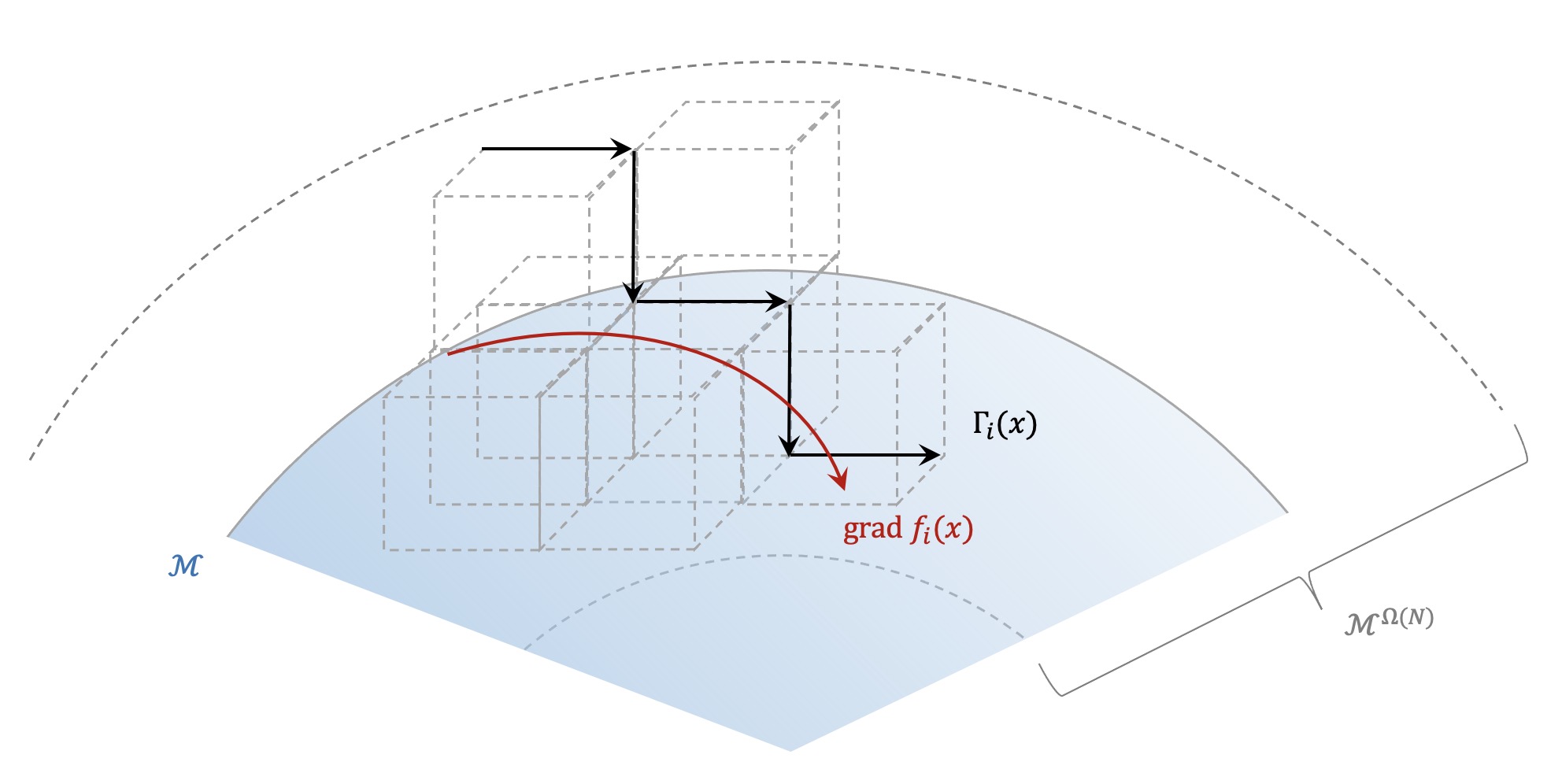}
\caption{Illustration of the geometry of the quantized gradient $\Gamma$ in the quantized region $\mathcal{M}^{\Omega(N)}$. Note that $\mathcal{M}^{\Omega(N)}$ is in a neighborhood of the compact submanifold $\mathcal{M}$.}
\label{fig2}
\end{figure}

\subsection{The Algorithm}

In this paper, we explore the compression of the Riemannian gradient using a uniform quantizer. A widely used quantization method is rounding-to-nearest, where a value is rounded to the nearest quantization grid point using the round function $\lfloor \cdot \rceil$, thereby minimizing the quantization error. Additionally, a value can be rounded down using the floor function $\lfloor \cdot \rfloor$ or rounded up using the ceiling function $\lceil \cdot \rceil$. To achieve uniform quantization over an $N$-bit fixed-point grid, we define the rounding-to-nearest quantizer as follows:
\begin{equation}
\begin{aligned}
    \tilde{\mathcal{Q}}_N(\cdot) &= \gamma(\cdot) \left[\operatorname{round}_N\left(\frac{\cdot}{\gamma(\cdot)} + \frac{1}{2}\right) - \frac{1}{2} \right] \\
    & \operatorname{where} \quad \operatorname{round}_N(x) := \frac{\lfloor x \cdot (2^N-1)\rceil}{2^N-1}.
\label{round1}
\end{aligned}
\end{equation}
Here, the scale factor $\gamma(\cdot)=2 \max (|\cdot|)$ is introduced to normalize values to the range $[-0.5,0.5]$. By shifting this range to $[0,1]$ through the addition of $0.5$, the round function can be applied directly within a uniform quantizer. After quantization, values are restored to their original range via inverse scaling. Notably, although the scale factor is used during quantization, it does not need to be additionally stored and communicated.

In fact, applying the retraction to the quantized Riemannian gradient may introduce some pathological issues. For example, the quantized version of the zero element may no longer be exactly zero, violating condition (b) in Definition \ref{retraction}. Additionally, the quantization noise may affect the second-order boundedness described in Eq. (\ref{second_order}).

Considering the landing algorithm \cite{ablin2022fast,sun2024global}, a retraction-free algorithm iterates as
\begin{equation}
    x_{k+1} = x_{k} - \alpha \Lambda(x_{k}).
\end{equation}
On a compact submanifold $\mathcal{M}$, the landing field $\Lambda$ is computed as
\begin{equation}
\begin{aligned}
    \Lambda(x) &:=\operatorname{grad} f(x)+\lambda \nabla \mathcal{N}(x) \\
    & \operatorname{where} \quad \mathcal{N}(x)=\operatorname{Dist}(x,\mathcal{M})^2.
\label{landing}
\end{aligned}
\end{equation}
In this case, the landing algorithm attracts $x$ towards $\mathcal{M}$ based on the term $\lambda \nabla \mathcal{N}(x)$. In particular, we have $\mathcal{N}(x)=\Vert x^\top x - \mathit{I}_r \Vert^2$ on the Stiefel manifold $\operatorname{St}(d,r)$.

Inspired by the landing algorithm, this work proposes a novel uniform quantizer to replace the role of the landing field closing to the manifold. Instead of using the predominant rounding-to-nearest approach for quantizing the Riemannian gradient, we determine whether to round up or down based on $\nabla \mathcal{N}(x)$. Specifically, we have
\begin{equation}
\begin{aligned}
    & \hat{\mathcal{Q}}_N(\cdot) = \gamma(\cdot) \left[\operatorname{round}_N\left(\frac{\cdot}{\gamma(\cdot)} + \frac{1}{2}\right) - \frac{1}{2} \right] \\
    & \operatorname{where} \;\; \operatorname{round}_N(x) := \frac{\lfloor x \cdot (2^N-1)\rfloor+\lfloor \operatorname{sigmoid}{\nabla \mathcal{N}(x)}\rceil}{2^N-1}.
\end{aligned}
\end{equation}
Here, $\operatorname{sigmoid}(x)=\frac{1}{1+e^{-x}}$ is employed to normalize values to the range $(0,1)$, which are then rounded to either $0$ or $1$ using $\lfloor \cdot \rceil$. Compared to Eq.(\ref{round1}), our method rounds down when $\nabla \mathcal{N}(x)$ is negative and up when $\nabla \mathcal{N}(x)$ is positive. As a result, $\hat{\mathcal{Q}}_N(\operatorname{grad} f(x))$ not only achieves quantization but also effectively serves the role of the landing field.

Furthermore, we introduce a uniform distribution $\mathcal{U}_N \sim \mathcal{U}(-\frac{0.5}{2^N-1}, +\frac{0.5}{2^N-1})$ to reduce the potential bias caused by quantization, ensuring the noise range is equivalent to the quantization error. Consequently, we express the uniform quantizer:
\begin{equation}
\begin{aligned}
    &\mathcal{Q}_N(\cdot) = \gamma(\cdot) \left[\operatorname{round}_N\left(\frac{\cdot}{\gamma(\cdot)} + \frac{1}{2} +\mathcal{U}_N\right) - \frac{1}{2} \right] \\ 
    & \operatorname{where} \;\; \operatorname{round}_N(x) := \frac{\lfloor x \cdot (2^N-1)\rfloor+\lfloor \operatorname{sigmoid}{\nabla \mathcal{N}(x)}\rceil}{2^N-1}.
\end{aligned}
\end{equation}
We require that the following assumption is satisfied.
\begin{assumption}
\label{unbias}
(Unbiasedness) The uniform quantizer $\mathcal{Q}(\cdot)$ is unbiased and has a bounded variance $\nu^2$:
\begin{equation}
\mathbb{E}[\mathcal{Q}(\mathbf{x})] = \mathbf{x}, \quad \operatorname{and} \quad \mathbb{E}[\Vert\mathcal{Q}(\mathbf{x})-\mathbf{x}\Vert^2] = \nu^2 .
\end{equation}
For any $\mathbf{x}$, the quantization is carried out independently on distributed nodes.
\end{assumption}

Given the uniform $N$-bit quantizer, the quantized Riemannian gradient can be expressed as the following form:
\begin{equation}
    \Gamma (x)=\mathcal{Q}_N(\operatorname{grad} f(x)),
\label{QRG}
\end{equation}
where the uniform quantizer $\mathcal{Q}_N(\cdot)$ for a gradient vector $[\operatorname{grad} f_1(x),\cdots,\operatorname{grad} f_n(x)]^\top$ is defined as $[\mathcal{Q}_N(\operatorname{grad} f_1(x)),\cdots,\mathcal{Q}_N(\operatorname{grad} f_n(x))]^\top$.

Compared to Eq. (\ref{landing}), the quantized Riemannian gradient in Eq. (\ref{QRG}) can be rewritten as a similar form
\begin{equation}
    \Gamma (x) = \operatorname{grad} f(x) + \frac{C}{2^N-1}\nabla \mathcal{N}(x) + \mathcal{U}_N ,
\label{similar}
\end{equation}
where $C=\frac{\gamma(\operatorname{grad} f(x))}{\gamma(\nabla \mathcal{N}(x))}$. The quantized gradient can be regarded as a stochastic landing field that stays randomly within a neighborhood of the manifold, ensuring that each iterate remains close to the manifold. We refer to this neighborhood as the quantized region, characterized by a safety factor $\Omega(N)$, as illustrated in Figure \ref{fig2}.

\begin{definition}
    (Quantized Region) A neighborhood of the compact submanifold, for $\Omega(N)$, satisfies:
    \begin{equation}
        \mathcal{M}^{\Omega(N)}:=\{x \in \mathbb{R}^{d \times r}| \operatorname{Dist}(x,\mathcal{M}) \leq \Omega(N) \},
    \end{equation}
    where $N$ represents the bit-width and the safety factor satisfies $\Omega(N)=\frac{C}{2^N-1}$ with a constant $C$.
\end{definition}

We show that the iteration $x_{k+1}=x_k -\alpha \Gamma(x_k)$ converges to the quantized region if and only if $x_0 \in \mathcal{M}^{\Omega(N)}$.
\begin{proposition}
    For the iteration $x_{k+1}=x_k -\alpha \Gamma(x_k)$ where $\Gamma(x)$ is defined in Eq.(\ref{similar}), the algorithm converges to the quantized region $\mathcal{M}^{\Omega(N)}$.
\end{proposition}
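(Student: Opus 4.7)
My Lyapunov function is $\mathcal{N}(x)=\operatorname{Dist}(x,\mathcal{M})^2$, which on $\operatorname{St}(d,r)$ equals $\|x^\top x-I_r\|^2$ and is $L_{\mathcal{N}}$-smooth on any bounded set containing the iterates. The plan is to derive a one-step recursion of the form
\begin{equation*}
\mathbb{E}\mathcal{N}(x_{k+1})\leq (1-c_1\alpha)\,\mathbb{E}\mathcal{N}(x_k)+c_2\alpha\,\Omega(N)^2+c_3\alpha^2,
\end{equation*}
whose unrolling yields $\limsup_k\mathbb{E}\mathcal{N}(x_k)\lesssim \Omega(N)^2$, so the iterates eventually lie in $\mathcal{M}^{\Omega(N)}$.

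First I would apply the smoothness inequality for $\mathcal{N}$,
\begin{equation*}
\mathcal{N}(x_{k+1})\leq \mathcal{N}(x_k)-\alpha\langle \nabla\mathcal{N}(x_k),\Gamma(x_k)\rangle+\tfrac{L_{\mathcal{N}}\alpha^2}{2}\|\Gamma(x_k)\|^2,
\end{equation*}
and substitute the three-term decomposition of $\Gamma$ from Eq.~(\ref{similar}). The inner product then splits into: (i) the cross term $\langle\nabla\mathcal{N}(x_k),\operatorname{grad} f(x_k)\rangle$, which is approximately zero because $\operatorname{grad} f$ is the tangential projection in (\ref{projection}) while $\nabla\mathcal{N}(x)=2(x-\pi_{\mathcal{M}}(x))$ lies along the normal direction at the nearest-point projection $\pi_{\mathcal{M}}(x_k)$; (ii) the attractive term $\frac{C}{2^N-1}\|\nabla\mathcal{N}(x_k)\|^2=\frac{4C}{2^N-1}\mathcal{N}(x_k)$, using the identity $\|\nabla\mathcal{N}\|^2=4\mathcal{N}$ for a squared distance function, which supplies the contraction factor $c_1\propto C/(2^N-1)$; and (iii) the noise term $\langle\nabla\mathcal{N}(x_k),\mathcal{U}_N\rangle$, which vanishes in expectation by Assumption~\ref{unbias}. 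Combining these with the quadratic remainder $\|\Gamma(x_k)\|^2\lesssim L_g^2+\tfrac{4C^2}{(2^N-1)^2}\mathcal{N}(x_k)+\mathbb{E}\|\mathcal{U}_N\|^2$ (using $\|\operatorname{grad} f\|\leq L_g$ from Proposition~\ref{pro1}) and choosing $\alpha$ small enough to absorb the $\mathcal{O}(\alpha^2)$ remainders produces the announced recursion, with stationary residual scaling as $\Omega(N)^2$.

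The main obstacle is step (i): the Riemannian gradient is defined via the tangent projection at the point $x_k$ itself, so the orthogonality $\operatorname{grad} f(x_k)\perp\nabla\mathcal{N}(x_k)$ is exact only when $x_k\in\mathcal{M}$. For $x_k\in\mathcal{M}^{\Omega(N)}\setminus\mathcal{M}$ I would extend $\operatorname{grad} f$ to the tubular neighborhood through the explicit formula (\ref{projection}), exploit Lipschitz continuity of $x\mapsto\mathcal{P}_{T_x\mathcal{M}}$ there, and use the proximal-smoothness inequality (\ref{inner}) to bound the residual cross term by $\mathcal{O}(L_g\,\mathcal{N}(x_k))$. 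This leftover can be absorbed into the dissipative term once the safety factor is chosen in the compatible regime $C/(2^N-1)\gtrsim L_g$, which is precisely the regime in which $\Omega(N)$ is meaningful. The assumption $x_0\in\mathcal{M}^{\Omega(N)}$ from the discussion preceding the proposition is automatic when one initializes on $\mathcal{M}$, since then $\mathcal{N}(x_0)=0\leq \Omega(N)^2$, and a standard forward-invariance argument using the same recursion confirms that the iterates never leave a slightly inflated region thereafter.
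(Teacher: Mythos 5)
Your Lyapunov argument is a genuinely different---and considerably more substantive---route than the paper's. The paper's own proof is essentially two sentences: it observes that $\nabla \mathcal{N}(x)=0$ for $x\in\mathcal{M}$ and asserts that following $-\nabla\mathcal{N}$ is Potter's algorithm, which is known to converge; it never engages with the full iteration $x_{k+1}=x_k-\alpha\Gamma(x_k)$, i.e., it silently discards the $\operatorname{grad} f$ and $\mathcal{U}_N$ components of Eq.~(\ref{similar}). Your descent-lemma recursion on $\mathcal{N}$ accounts for all three components: the dissipative term via the identity $\Vert\nabla\mathcal{N}\Vert^2=4\mathcal{N}$ (valid for the true squared distance inside the proximal-smoothness tube, where $\nabla\mathcal{N}(x)=2(x-\pi_{\mathcal{M}}(x))$), the noise term via unbiasedness in Assumption~\ref{unbias}, and the tangential cross term via Lipschitz continuity of the projection and Eq.~(\ref{inner}). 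What your approach buys is an actual quantitative conclusion, $\limsup_k \mathbb{E}\,\mathcal{N}(x_k)\lesssim \Omega(N)^2$ for $\alpha$ small enough, together with the forward-invariance needed to keep $\mathcal{N}$ smooth along the trajectory; what the paper's framing buys is only brevity and an appeal to the landing-algorithm literature.

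One point in your plan deserves to be promoted from a remark to an explicit hypothesis. Your contraction coefficient is $c_1 \asymp \tfrac{4C}{2^N-1}-\mathcal{O}(L_g/R)=4\,\Omega(N)-\mathcal{O}(L_g/R)$, so the recursion contracts only when $\Omega(N)\gtrsim L_g/R$, i.e., for sufficiently coarse quantization (or sufficiently large $C$). You describe this as ``precisely the regime in which $\Omega(N)$ is meaningful,'' but it is the opposite: the claim is most interesting for large $N$, where $\mathcal{M}^{\Omega(N)}$ is a tight tube around $\mathcal{M}$, and there the attraction $4\,\Omega(N)\,\mathcal{N}$ may be too weak to dominate the worst-case tangential misalignment $\mathcal{O}(L_g\mathcal{N}/R)$. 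This is less a defect of your proof than a condition the proposition (and the paper's proof, which ignores $\operatorname{grad} f$ entirely) never states; state it as an assumption, and the rest of your argument goes through.
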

\begin{proof}
    Using the function $\mathcal{N}(x)=\operatorname{Dist}(x,\mathcal{M})^2$, it follows that $\nabla \mathcal{N}(x)=0$ when $x \in \mathcal{M}$. This condition serves as the sole criterion for the algorithm's termination. A simple approach to design an algorithm that converges to $\mathcal{M}^{\Omega(N)}$ consists in following $-\nabla \mathcal{N}(x)$, which leads in the discrete setting to Potter's algorithm $x_{k+1}=x_k - \frac{C\alpha}{2^N-1} \nabla \mathcal{N}(x)$ \cite{cardoso1996equivariant}. This algorithm can be regarded as a generalization of the case on the Stiefel manifold \cite{ablin2022fast}. The proof is completed.
\end{proof}

\begin{algorithm}[htbp]
	\caption{Quantized Riemannian Gradient Tracking}
	\label{alg2}
	\begin{algorithmic}[1]
		\REQUIRE
		Initial point $\mathbf{x}_0 \in \mathcal{M}^{\Omega(N)}$, an integer $N\in [1,32]$, an integer $t$, the step size $\alpha$. \\
            \algorithmiccomment{for each node $i \in [n]$, in parallel}
		\FOR{$k=0,\cdots$}
            \STATE Update $x_{i,k+1}=\sum_{j=1}^n W_{i j}^t x_{j,k}-\alpha s_{i,k}$
            \STATE Compute Riemannian gradient $\operatorname{grad} f(x_{i,k+1})$
            \STATE Quantize the gradient $\Gamma_{i,k+1}=\mathcal{Q}_N(\operatorname{grad} f(x_{i,k+1}))$
            \STATE Quantized Riemannian gradient tracking \\ $s_{i,k+1}=\sum_{j=1}^n W_{i j}^t s_{j,k} + \Gamma_{i,k+1} - \Gamma_{i,k}$
		\ENDFOR
	\end{algorithmic}
\end{algorithm}

By the Lipschitz smoothness in Proposition \ref{pro1}, we define the following relationship on the quantized region
\begin{equation}
    L_m=\max(L_g, \max_{x\in \mathcal{M}^{\Omega(N)}} \Vert \nabla f(x) \Vert) .
\label{lm}
\end{equation}
Additionally, let $L_{\Gamma}$ denote the Lipschitz smoothness factor of the quantized Riemannian gradient $\Gamma(x)$. Then, the smoothness factors satisfy the following relationship: $L_g \leq L_{\Gamma} \leq L_m$.

Consequently, with the help of the gradient tracking algorithm, we propose the Quantized Riemannian gradient tracking method (Q-RGT):
\begin{equation}
    s_{i,k+1}=\sum_{j=1}^n W_{i j}^t s_{j,k} + \Gamma_{i,k+1} - \Gamma_{i,k}.
\end{equation}
And the exact update iterates as
\begin{equation}
    x_{i,k+1}=\sum_{j=1}^n W_{i j}^t x_{j,k}-\alpha s_{i,k}.
\end{equation}
The Q-RGT method is presented in Algorithm~\ref{alg2}, which is efficient in both computation and communication.

\subsection{Technical Lemmas}

For ease of notation, let us denote
\[
\begin{aligned}
&\bar{x}_k:=\frac{1}{n} \sum_{i=1}^n x_{i,k}, \quad \bar{s}_k:=\frac{1}{n} \sum_{i=1}^n s_{i,k}, \quad \bar{\Gamma}_k:=\frac{1}{n} \sum_{i=1}^n \Gamma_{i,k} \\
&\bar{\mathbf{x}}_k:=\mathbf{1} \otimes \bar{x}_k, \quad \bar{\mathbf{s}}_k:=\mathbf{1} \otimes \bar{s}_k, \quad \bar{\mathbf{\Gamma}}_k:=\mathbf{1} \otimes \bar{\Gamma}_k .
\end{aligned}
\]
We present that the iterate of Euclidean average $\bar{x}$ remains in the quantized region $\mathcal{M}^{\Omega(N)}$ and the consensus error satisfies $\Vert \mathbf{x} - \bar{\mathbf{x}}\Vert \leq \Omega(N)$ as long as the step size is small enough.
\begin{proposition}
    \cite{sun2024global,ablin2024infeasible} Let $\bar{x} \in \mathcal{M}^{\Omega(N)}$ and $\Vert \mathbf{x} - \bar{\mathbf{x}}\Vert \leq \Omega(N)$. Assuming that $\operatorname{grad} f(x) \leq L_g$, if the step size $\alpha$ is small enough, the next iterate satisfies $\bar{x}^* \in \mathcal{M}^{\Omega(N)}$ and $\Vert \mathbf{x}^* - \bar{\mathbf{x}}^*\Vert \leq \Omega(N)$.
\end{proposition}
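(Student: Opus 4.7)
My plan is to prove the invariance as an inductive step: assuming $\bar{x}_k \in \mathcal{M}^{\Omega(N)}$ and $\|\mathbf{x}_k - \bar{\mathbf{x}}_k\| \leq \Omega(N)$, I show the same two conditions hold at step $k+1$ provided $\alpha$ is below an explicit threshold. The first tool I need is the averaged dynamics. Because $W$ is doubly stochastic, summing the Q-RGT updates gives $\bar{x}_{k+1} = \bar{x}_k - \alpha \bar{s}_k$ and $\bar{s}_{k+1} - \bar{s}_k = \bar{\Gamma}_{k+1} - \bar{\Gamma}_k$, so a short induction (initializing $s_{i,0} = \Gamma_{i,0}$) yields $\bar{s}_k = \bar{\Gamma}_k$ and hence $\bar{x}_{k+1} = \bar{x}_k - \alpha \bar{\Gamma}_k$. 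This reduces the first claim to a single-step analysis of the landing-type field from Eq.~(\ref{similar}).

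For the distance bound, I would estimate $\mathcal{N}(\bar{x}_{k+1})$ using a Taylor expansion of $\mathcal{N}$ with Lipschitz $\nabla \mathcal{N}$ on a neighborhood of $\mathcal{M}$. Writing $\bar{\Gamma}_k = \operatorname{grad} f(\bar{x}_k) + \tfrac{C}{2^N-1}\nabla \mathcal{N}(\bar{x}_k) + \mathcal{U}_N + \varepsilon_k$, where $\varepsilon_k$ aggregates the consensus-induced discrepancy $\bar{\Gamma}_k - \Gamma(\bar{x}_k)$ (controllable by $L_\Gamma\|\mathbf{x}_k-\bar{\mathbf{x}}_k\|/n$), the key observation is that $\langle \nabla \mathcal{N}(\bar{x}_k), \operatorname{grad} f(\bar{x}_k)\rangle$ is small because $\nabla \mathcal{N}(\bar{x}_k)$ lies essentially in $N_x\mathcal{M}$ while $\operatorname{grad} f$ is tangential; the proximal smoothness inequality (\ref{inner}) quantifies this. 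Then the dominant first-order term in $\mathcal{N}(\bar{x}_{k+1}) - \mathcal{N}(\bar{x}_k)$ is $-2\alpha\tfrac{C}{2^N-1}\|\nabla\mathcal{N}(\bar{x}_k)\|^2$, a contraction toward $\mathcal{M}$, while the $\mathcal{O}(\alpha^2)$ term can be absorbed by the Lipschitz constant of $\nabla\mathcal{N}$ times a bound on $\|\bar{\Gamma}_k\|$ (obtained via $L_m$ in Eq.~(\ref{lm})). A step size $\alpha \leq c/L_m$ guarantees $\operatorname{Dist}(\bar{x}_{k+1},\mathcal{M}) \leq \Omega(N)$.

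For the consensus bound, I would subtract the averaged from the per-node update, yielding $\mathbf{x}_{k+1} - \bar{\mathbf{x}}_{k+1} = \mathbf{W}^t(\mathbf{x}_k - \bar{\mathbf{x}}_k) - \alpha(\mathbf{s}_k - \bar{\mathbf{s}}_k)$. Since the increment has zero mean across agents, Assumption~\ref{weight} gives the contraction $\|\mathbf{W}^t(\mathbf{x}_k-\bar{\mathbf{x}}_k)\| \leq \sigma_2^t \|\mathbf{x}_k-\bar{\mathbf{x}}_k\| \leq \sigma_2^t \Omega(N)$. It then suffices to bound $\|\mathbf{s}_k - \bar{\mathbf{s}}_k\|$ by a constant $D$: this follows from unrolling the tracking recursion, using the same $\mathbf{W}^t$-contraction on the tracking centered process and the Lipschitz smoothness of $\Gamma$ on $\mathcal{M}^{\Omega(N)}$ with constant $L_\Gamma$, together with the uniform bound $\|\Gamma_{i,k}\| \leq L_m$. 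Combining, $\|\mathbf{x}_{k+1}-\bar{\mathbf{x}}_{k+1}\| \leq \sigma_2^t \Omega(N) + \alpha D$, and $\alpha \leq (1-\sigma_2^t)\Omega(N)/D$ closes the induction.

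The main obstacle is the joint control of the two invariants: the bound on $\|\mathbf{s}_k - \bar{\mathbf{s}}_k\|$ implicitly assumes the iterates remain in $\mathcal{M}^{\Omega(N)}$ (so that $L_\Gamma$ and the gradient bounds apply), while the distance analysis for $\bar{x}_{k+1}$ relies on $\|\mathbf{x}_k - \bar{\mathbf{x}}_k\|$ being already at most $\Omega(N)$ to keep $\varepsilon_k$ small. The threshold on $\alpha$ must therefore be chosen simultaneously in terms of $\sigma_2^t$, $L_m$, $L_\Gamma$, the Lipschitz constant of $\nabla\mathcal{N}$, and $\Omega(N) = C/(2^N-1)$, so that both inequalities propagate together; this is precisely the joint step-size condition derived in~\cite{sun2024global,ablin2024infeasible}, which I would cite at the end of the argument.
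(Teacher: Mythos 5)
The paper does not actually prove this proposition: it is imported verbatim by citation from \cite{sun2024global,ablin2024infeasible}, with no proof environment attached. So there is nothing in the paper to compare your argument against line by line; what you have written is a reconstruction of the safe-step-size invariance argument from the landing-algorithm literature, and in outline it is the right reconstruction. The reduction to the averaged dynamics via double stochasticity ($\bar{s}_k=\bar{\Gamma}_k$, hence $\bar{x}_{k+1}=\bar{x}_k-\alpha\bar{\Gamma}_k$), the one-step estimate of $\mathcal{N}$ along the field in Eq.~(\ref{similar}), and the consensus contraction $\Vert\mathbf{x}_{k+1}-\bar{\mathbf{x}}_{k+1}\Vert\leq\sigma_2^t\Omega(N)+\alpha D$ with a uniform bound $D$ on $\Vert\mathbf{s}_k-\bar{\mathbf{s}}_k\Vert$ obtained by unrolling the tracking recursion are all the standard ingredients, and your closing remark about choosing $\alpha$ jointly so that both invariants propagate is exactly the point of the result.

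Two places in the middle step are thinner than they should be. First, you justify the near-orthogonality $\langle\nabla\mathcal{N}(\bar{x}_k),\operatorname{grad} f(\bar{x}_k)\rangle\approx 0$ by appealing to Eq.~(\ref{inner}), but that inequality is stated for $x,y\in\mathcal{M}$ and $v\in N_x\mathcal{M}$ and says nothing about a point $\bar{x}_k\notin\mathcal{M}$; what is actually needed is that $\nabla\mathcal{N}(x)=2\bigl(x-\mathcal{P}_{\mathcal{M}}(x)\bigr)$ is normal to $\mathcal{M}$ at the projection point and that the (extended) Riemannian gradient at $x$ is close to tangent there, an estimate that degrades with $\operatorname{Dist}(x,\mathcal{M})$ and must be quantified separately (in the Stiefel case of \cite{ablin2022fast} the orthogonality is exact by construction). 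Second, the claim that the term $-\alpha\tfrac{C}{2^N-1}\Vert\nabla\mathcal{N}(\bar{x}_k)\Vert^2$ yields "a contraction toward $\mathcal{M}$" needs the identity $\Vert\nabla\mathcal{N}(x)\Vert^2=4\mathcal{N}(x)$ inside the proximal-smoothness tube, and even then the attraction vanishes as $\bar{x}_k\to\mathcal{M}$ while the quantization noise $\mathcal{U}_N$ does not; the invariance therefore requires a two-regime argument (deep inside the tube the whole step $\alpha\Vert\bar{\Gamma}_k\Vert\leq\alpha L_m$ is too short to escape, near the boundary the attraction must dominate the noise), and it is this balance that fixes the admissible constant $C$ in $\Omega(N)=C/(2^N-1)$ and forces the step-size threshold to depend on $N$, not only on $L_m$ as your intermediate claim $\alpha\leq c/L_m$ suggests. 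Neither issue is fatal, but both are where the actual work in the cited proofs lives.
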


Firstly, we consider the stability of Algorithm \ref{alg2} in terms of a linear system. We write this algorithm in a matrix formulation:
\begin{equation}
\begin{aligned}
    & \mathbf{x}_{k+1}=\mathbf{W} \mathbf{x}_{k} - \alpha \mathbf{s}_{k} \\
    & \mathbf{s}_{k+1} = \mathbf{W} \mathbf{s}_{k} + \mathbf{\Gamma}(\mathbf{x}_{k+1}) - \mathbf{\Gamma}(\mathbf{x}_{k})
\end{aligned}
\label{matrix}
\end{equation}
Next that we present the following lemmas to bound the consensus errors $\Vert \mathbf{x}_k - \bar{\mathbf{x}}_k \Vert^2$ and $\Vert \mathbf{s}_k - \bar{\mathbf{s}}_k \Vert^2$.

\begin{lemma}
\label{x}
    Under Assumption~\ref{weight}. Let $\{\mathbf{x}_k\}$ be generated by Eq.(\ref{matrix}), then the following inequality on $\mathbf{x}$ holds:
    \[
    \begin{aligned}
       & \Vert \mathbf{x}_k - \bar{\mathbf{x}}_k \Vert^2 \\
       & \leq \frac{(1+\sigma_2)\sigma_2}{2} \Vert\mathbf{x}_{k-1} - \bar{\mathbf{x}}_{k-1}\Vert^2 + \frac{1+\sigma_2}{1-\sigma_2}\alpha^2 \Vert \mathbf{s}_{k-1} - \bar{\mathbf{s}}_{k-1} \Vert^2 .
    \end{aligned}
    \]
\end{lemma}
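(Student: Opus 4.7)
The plan is to rewrite the consensus error $\mathbf{x}_k - \bar{\mathbf{x}}_k$ as a single linear expression in the quantities at step $k-1$, and then apply Young's inequality with a carefully tuned parameter so that the two coefficients on the right-hand side come out exactly as stated.

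First I would compute $\bar{\mathbf{x}}_k$. Because $W$ is doubly stochastic by Assumption~\ref{weight}, the column vector $\mathbf{1}$ is a left and right eigenvector of $W^t$ with eigenvalue $1$, hence $\mathbf{W}\bar{\mathbf{x}}_{k-1}=\bar{\mathbf{x}}_{k-1}$ (with $\mathbf{W}=W^t \otimes I_d$). Averaging the first equation of (\ref{matrix}) yields $\bar{x}_k = \bar{x}_{k-1} - \alpha \bar{s}_{k-1}$, i.e.\ $\bar{\mathbf{x}}_k = \bar{\mathbf{x}}_{k-1} - \alpha \bar{\mathbf{s}}_{k-1}$. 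Subtracting this from the update in (\ref{matrix}) gives
\begin{equation*}
\mathbf{x}_k - \bar{\mathbf{x}}_k \;=\; \mathbf{W}(\mathbf{x}_{k-1}-\bar{\mathbf{x}}_{k-1}) - \alpha(\mathbf{s}_{k-1}-\bar{\mathbf{s}}_{k-1}).
\end{equation*}

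Next I would take the squared Frobenius norm and apply Young's inequality $\|a-b\|^2 \leq (1+\epsilon)\|a\|^2 + (1+\epsilon^{-1})\|b\|^2$. The key observation is that the vector $\mathbf{x}_{k-1}-\bar{\mathbf{x}}_{k-1}$ lives in the orthogonal complement of the consensus subspace $\mathrm{span}(\mathbf{1}\otimes I_d)$, so by Assumption~\ref{weight} $\|\mathbf{W}(\mathbf{x}_{k-1}-\bar{\mathbf{x}}_{k-1})\|^2 \leq \sigma_2^2 \|\mathbf{x}_{k-1}-\bar{\mathbf{x}}_{k-1}\|^2$. To force the first coefficient to equal $(1+\sigma_2)\sigma_2/2$, I would choose $\epsilon = (1-\sigma_2)/(2\sigma_2)$, which yields $(1+\epsilon)\sigma_2^2 = (1+\sigma_2)\sigma_2/2$ and simultaneously $1+\epsilon^{-1} = (1+\sigma_2)/(1-\sigma_2)$, matching the second coefficient.

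Assembling the pieces produces exactly the claimed inequality. I do not anticipate any serious obstacle: the only nontrivial step is recognizing the right Young's parameter, and the tuning above is essentially forced by the desired coefficient on the $\|\mathbf{x}_{k-1}-\bar{\mathbf{x}}_{k-1}\|^2$ term. The contractivity estimate for $\mathbf{W}$ on the mean-zero subspace is standard once doubly stochasticity of $W$ and positivity of $1-\sigma_2$ are in hand.
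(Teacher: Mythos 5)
Your proposal is correct and follows essentially the same route as the paper: the identical decomposition $\mathbf{x}_k-\bar{\mathbf{x}}_k=\mathbf{W}(\mathbf{x}_{k-1}-\bar{\mathbf{x}}_{k-1})-\alpha(\mathbf{s}_{k-1}-\bar{\mathbf{s}}_{k-1})$, Young's inequality with the same parameter (the paper's first coefficient $\tfrac{1+\sigma_2}{2\sigma_2}$ corresponds exactly to your $\epsilon=(1-\sigma_2)/(2\sigma_2)$), and the spectral contraction of $\mathbf{W}$ on the mean-zero subspace. Your write-up is in fact slightly more explicit than the paper's, since you justify the averaging identity $\bar{\mathbf{x}}_k=\bar{\mathbf{x}}_{k-1}-\alpha\bar{\mathbf{s}}_{k-1}$ and the restriction to the orthogonal complement of the consensus subspace, which the paper leaves implicit.
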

\begin{proof}
By the definition of $\mathbf{x}_k$, we have
\begin{equation}
\begin{aligned}
    & \Vert \mathbf{x}_k - \bar{\mathbf{x}}_k \Vert^2=\Vert \mathbf{x}_k - \mathbf{1}\otimes\bar{x}_k \Vert^2 \\
    &=\Vert \mathbf{W} (\mathbf{x}_{k-1} - \bar{\mathbf{x}}_{k-1})- \alpha (\mathbf{s}_{k-1} - \bar{\mathbf{s}}_{k-1}) \Vert^2 \\
    & \leq \frac{1+\sigma_2}{2\sigma_2} \Vert \mathbf{W} (\mathbf{x}_{k-1} - \bar{\mathbf{x}}_{k-1})\Vert^2 + \frac{1+\sigma_2}{1-\sigma_2}\alpha^2 \Vert \mathbf{s}_{k-1} - \bar{\mathbf{s}}_{k-1} \Vert^2 \\
    & \leq \frac{(1+\sigma_2)\sigma_2}{2} \Vert\mathbf{x}_{k-1} - \bar{\mathbf{x}}_{k-1}\Vert^2 + \frac{1+\sigma_2}{1-\sigma_2}\alpha^2 \Vert \mathbf{s}_{k-1} - \bar{\mathbf{s}}_{k-1} \Vert^2,
\end{aligned}
\end{equation}
where Cauchy–Schwarz inequality and the definition of $\mathbf{W}$ are applied. The proof is completed.
\end{proof}

\begin{lemma}
\label{s}
    Under Assumption~\ref{weight}. Let $\{\mathbf{x}_k\}$ be generated by Eq.(\ref{matrix}), then the following inequality on $\mathbf{s}$ holds:
    \[
    \begin{aligned}
        & \Vert \mathbf{s}_k - \bar{\mathbf{s}}_k \Vert^2 \\
        & \leq \left(\frac{(1+\sigma_2)\sigma_2}{2} + 4 L_m^2 \frac{1+\sigma_2}{1-\sigma_2}\alpha^2\right) \Vert \mathbf{s}_{k-1} - \bar{\mathbf{s}}_{k-1}\Vert^2 \\
        & + 2 L_m^2 \frac{1+\sigma_2}{1-\sigma_2}\sigma_2^2 \Vert \mathbf{x}_{k-1} - \bar{\mathbf{x}}_{k-1}\Vert^2 + 4 L_m^2 \frac{1+\sigma_2}{1-\sigma_2} \alpha^2 \Vert \bar{\mathbf{s}}_{k-1} \Vert^2 .
    \end{aligned}
    \]
\end{lemma}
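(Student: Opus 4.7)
My plan parallels Lemma~\ref{x}, but the tracking recursion carries the extra increment $\mathbf{\Gamma}_k - \mathbf{\Gamma}_{k-1}$ that I would control through Lipschitz smoothness. First, writing the $\mathbf{s}$-update in matrix form and using double stochasticity of $\mathbf{W}$ (equivalently $\mathbf{J}\mathbf{W}=\mathbf{J}$ for $\mathbf{J}=\frac{1}{n}\mathbf{1}\mathbf{1}^\top\otimes I_d$), subtracting the average gives
\[
\mathbf{s}_k - \bar{\mathbf{s}}_k = \mathbf{W}(\mathbf{s}_{k-1}-\bar{\mathbf{s}}_{k-1}) + (I-\mathbf{J})(\mathbf{\Gamma}_k-\mathbf{\Gamma}_{k-1}).
\]
Squaring and applying Young's inequality with weight $\epsilon=(1-\sigma_2)/(2\sigma_2)$, then using $\Vert \mathbf{W}y\Vert \leq \sigma_2\Vert y\Vert$ on the orthogonal complement of $\mathbf{1}$ and $\Vert I-\mathbf{J}\Vert \leq 1$, reproduces the $\frac{(1+\sigma_2)\sigma_2}{2}$ and $\frac{1+\sigma_2}{1-\sigma_2}$ prefactors familiar from Lemma~\ref{x}.

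Next, by Proposition~\ref{pro1} together with $L_\Gamma \leq L_m$ from~\eqref{lm}, I would bound $\Vert \mathbf{\Gamma}_k - \mathbf{\Gamma}_{k-1}\Vert^2 \leq L_m^2 \Vert \mathbf{x}_k - \mathbf{x}_{k-1}\Vert^2$ and then substitute the primal update $\mathbf{x}_k = \mathbf{W}\mathbf{x}_{k-1} - \alpha\mathbf{s}_{k-1}$. Using $(\mathbf{W}-I)\bar{\mathbf{x}}_{k-1}=0$ and splitting $\mathbf{s}_{k-1} = (\mathbf{s}_{k-1}-\bar{\mathbf{s}}_{k-1}) + \bar{\mathbf{s}}_{k-1}$, the step decomposes as
\[
\mathbf{x}_k - \mathbf{x}_{k-1} = (\mathbf{W}-I)(\mathbf{x}_{k-1}-\bar{\mathbf{x}}_{k-1}) - \alpha(\mathbf{s}_{k-1}-\bar{\mathbf{s}}_{k-1}) - \alpha \bar{\mathbf{s}}_{k-1}.
\]
A three-term Cauchy--Schwarz bound of the form $\Vert a+b+c\Vert^2 \leq 2\Vert a\Vert^2 + 4\Vert b\Vert^2 + 4\Vert c\Vert^2$ then produces the coefficients $2, 4\alpha^2, 4\alpha^2$ multiplying the three quantities in the target inequality, once the $(\mathbf{W}-I)$ piece is controlled by the spectral gap.

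Substituting the resulting bound on $\Vert \mathbf{x}_k - \mathbf{x}_{k-1}\Vert^2$ back into the Young decomposition from the first step and collecting terms yields the statement. The delicate point is the last substitution: the straightforward operator-norm estimate $\Vert(\mathbf{W}-I)y\Vert \leq (1+\sigma_2)\Vert y\Vert$ for $y$ orthogonal to $\mathbf{1}$ would only give the looser coefficient $2(1+\sigma_2)^2$, so recovering the stated $2\sigma_2^2$ requires isolating the $\mathbf{W}y$ component---where the spectral gap $\sigma_2$ genuinely applies---and absorbing the remaining identity contribution into the $\alpha$-scaled terms via a suitable rearrangement of the Young split. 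Every other step is routine bookkeeping of the kind already used in Lemma~\ref{x}.
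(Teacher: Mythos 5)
Your proposal follows essentially the same route as the paper's proof: the same mean-subtracted recursion for $\mathbf{s}_k-\bar{\mathbf{s}}_k$, the same Young split with weight $(1-\sigma_2)/(2\sigma_2)$ producing the $\frac{(1+\sigma_2)\sigma_2}{2}$ and $\frac{1+\sigma_2}{1-\sigma_2}$ prefactors, the contraction $\Vert I-\mathbf{J}\Vert\le 1$, the Lipschitz bound $\Vert\mathbf{\Gamma}_k-\mathbf{\Gamma}_{k-1}\Vert\le L_m\Vert\mathbf{x}_k-\mathbf{x}_{k-1}\Vert$, and the substitution $\mathbf{x}_k-\mathbf{x}_{k-1}=(\mathbf{W}-\mathbf{I})\mathbf{x}_{k-1}-\alpha\mathbf{s}_{k-1}$ followed by the split of $\mathbf{s}_{k-1}$ into its consensus and average parts, which yields exactly the coefficients $2$, $4\alpha^2$, $4\alpha^2$. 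The one place where you hedge is precisely the step the paper itself asserts without justification: the paper passes directly from $\Vert(\mathbf{W}-\mathbf{I})\mathbf{x}_{k-1}\Vert^2$ to $\sigma_2^2\Vert\mathbf{x}_{k-1}-\bar{\mathbf{x}}_{k-1}\Vert^2$. You are right to be suspicious. On the mean-zero subspace the eigenvalues of $\mathbf{W}-\mathbf{I}$ are $\lambda-1$ with $|\lambda|\le\sigma_2$, so $\Vert(\mathbf{W}-\mathbf{I})y\Vert\le(1+\sigma_2)\Vert y\Vert$ is the honest bound, and your proposed repair does not close the gap: the identity contribution $-(\mathbf{x}_{k-1}-\bar{\mathbf{x}}_{k-1})$ carries no factor of $\alpha$, so it cannot be ``absorbed into the $\alpha$-scaled terms''; it must remain as an additional multiple of $\Vert\mathbf{x}_{k-1}-\bar{\mathbf{x}}_{k-1}\Vert^2$. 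The correct coefficient on that term is therefore $2(1+\sigma_2)^2\frac{1+\sigma_2}{1-\sigma_2}L_m^2$ rather than $2\sigma_2^2\frac{1+\sigma_2}{1-\sigma_2}L_m^2$ --- a constant-factor loosening that propagates only into the numerical constants of Theorem~\ref{stable} and Corollary~\ref{x}, not into the $\mathcal{O}(1/K)$ rate. In short: your derivation matches the paper's step for step, and the single step you could not justify is one the paper does not justify either.
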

\begin{proof}
It follows from $L_g \leq L_{\Gamma} \leq L_m$ that
\begin{equation}
\begin{aligned}
    & \Vert \mathbf{s}_k - \bar{\mathbf{s}}_k \Vert^2 \\
    & = \Vert \mathbf{W} (\mathbf{s}_{k-1} - \bar{\mathbf{s}}_{k-1}) + (\mathbf{\Gamma}_k - \bar{\mathbf{\Gamma}}_k - \mathbf{\Gamma}_{k-1} + \bar{\mathbf{\Gamma}}_{k-1})\Vert^2 \\
    & \leq \frac{(1+\sigma_2)\sigma_2}{2}\Vert \mathbf{s}_{k-1} - \bar{\mathbf{s}}_{k-1}\Vert^2 \\
    & + \frac{1+\sigma_2}{1-\sigma_2} \Vert \mathbf{\Gamma}_k - \bar{\mathbf{\Gamma}}_k - \mathbf{\Gamma}_{k-1} + \bar{\mathbf{\Gamma}}_{k-1}\Vert^2 \\
    & \leq  \frac{(1+\sigma_2)\sigma_2}{2} \Vert \mathbf{s}_{k-1} - \bar{\mathbf{s}}_{k-1}\Vert^2 + \frac{1+\sigma_2}{1-\sigma_2}\Vert \mathbf{\Gamma}_k - \mathbf{\Gamma}_{k-1}\Vert^2 \\
    & \leq \frac{(1+\sigma_2)\sigma_2}{2} \Vert \mathbf{s}_{k-1} - \bar{\mathbf{s}}_{k-1}\Vert^2 + \frac{1+\sigma_2}{1-\sigma_2} L_m^2 \Vert \mathbf{x}_k - \mathbf{x}_{k-1}\Vert^2 \\
    & \leq \frac{(1+\sigma_2)\sigma_2}{2} \Vert \mathbf{s}_{k-1} - \bar{\mathbf{s}}_{k-1}\Vert^2 \\
    & + \frac{1+\sigma_2}{1-\sigma_2} L_m^2 \Vert (\mathbf{W}-\mathbf{I})\mathbf{x}_{k-1} - \alpha \mathbf{s}_{k-1}\Vert^2 \\
    & \leq \frac{(1+\sigma_2)\sigma_2}{2} \Vert \mathbf{s}_{k-1} - \bar{\mathbf{s}}_{k-1}\Vert^2 + 2L_m^2 \frac{1+\sigma_2}{1-\sigma_2} \alpha^2 \Vert \mathbf{s}_{k-1} \Vert^2\\
    & + 2 L_m^2 \frac{1+\sigma_2}{1-\sigma_2} \sigma_2^2 \Vert \mathbf{x}_{k-1} - \bar{\mathbf{x}}_{k-1}\Vert^2 ,
\end{aligned}    
\end{equation}
where we use the same technique as Lemma \ref{x}. Next, it follows from the decomposition of $\Vert \mathbf{s}_{k-1} \Vert^2$ that
\begin{equation}
\begin{aligned}
    & \Vert \mathbf{s}_{k-1} \Vert^2 = \Vert \mathbf{s}_{k-1} - \bar{\mathbf{s}}_{k-1} + \bar{\mathbf{s}}_{k-1} \Vert^2  \\
    & \leq 2 \Vert \mathbf{s}_{k-1} - \bar{\mathbf{s}}_{k-1} \Vert^2 + 2 \Vert \bar{\mathbf{s}}_{k-1} \Vert^2 .
\end{aligned}
\end{equation}
Combining the above two inequalities, we have
\begin{equation}
\begin{aligned}
    & \Vert \mathbf{s}_k - \bar{\mathbf{s}}_k \Vert^2 \\
    & \leq \left(\frac{(1+\sigma_2)\sigma_2}{2} + 4 L_m^2 \frac{1+\sigma_2}{1-\sigma_2}\alpha^2\right) \Vert \mathbf{s}_{k-1} - \bar{\mathbf{s}}_{k-1}\Vert^2 \\
    & + 2 L_m^2 \frac{1+\sigma_2}{1-\sigma_2}\sigma_2^2 \Vert \mathbf{x}_{k-1} - \bar{\mathbf{x}}_{k-1}\Vert^2 + 4 L_m^2 \frac{1+\sigma_2}{1-\sigma_2} \alpha^2 \Vert \bar{\mathbf{s}}_{k-1} \Vert^2 .
\end{aligned}
\end{equation}
The proof is completed.
\end{proof}

With the above two inequalities on the consensus errors, we prove the stability of Algorithm \ref{alg2} in the following theorem.

\begin{theorem}
\label{stable}
    When Lemma~\ref{x} and \ref{s} are satisfied, we can write $\Vert \mathbf{x}_k - \bar{\mathbf{x}}_k\Vert^2$ and $\Vert \mathbf{s}_k - \bar{\mathbf{s}}_k\Vert^2$ as a linear system with input signal $\Upsilon$, state $\Phi$ and transition $\Lambda$,
    \begin{equation}
        \Phi_k \leq \Lambda \Phi_{k-1} + \Upsilon_{k-1},
    \end{equation}
    where
    \begin{equation}
    \begin{aligned}
        \Phi_k &:= \begin{bmatrix} \frac{\Vert \mathbf{s}_k - \bar{\mathbf{s}}_k\Vert^2}{L_m} \\ L_m \Vert \mathbf{x}_k - \bar{\mathbf{x}}_k\Vert^2
        \end{bmatrix}, \\
        \Lambda &:= \begin{bmatrix}  \frac{(1+\sigma_2)\sigma_2}{2} + 4 L_m^2 \frac{1+\sigma_2}{1-\sigma_2}\alpha^2 & 2 \frac{1+\sigma_2}{1-\sigma_2}\sigma_2^2 \\ \frac{1+\sigma_2}{1-\sigma_2}\alpha^2 L_m^2 & \frac{(1+\sigma_2)\sigma_2}{2}
        \end{bmatrix}, \\
        \Upsilon_{k-1} &:= \begin{bmatrix} 4 L_m \frac{1+\sigma_2}{1-\sigma_2} \alpha^2 \Vert \bar{\mathbf{s}}_{k} \Vert^2 \\ 0
        \end{bmatrix} .
    \end{aligned}
    \end{equation}
    If $\alpha < \frac{(1-\sigma_2)^2}{4 L_m}$, then the linear system is stable.
\end{theorem}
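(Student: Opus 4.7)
The stability of the linear system $\Phi_k \leq \Lambda \Phi_{k-1} + \Upsilon_{k-1}$ with nonnegative transition $\Lambda$ and nonnegative inputs is equivalent to the spectral radius condition $\rho(\Lambda)<1$; once this holds, iterating yields $\Phi_k\leq\Lambda^k\Phi_0+\sum_{j=0}^{k-1}\Lambda^{k-1-j}\Upsilon_j$ and $\Lambda^k$ decays geometrically, so bounded inputs produce bounded states. The plan therefore reduces to verifying $\rho(\Lambda)<1$ under the hypothesis $\alpha<(1-\sigma_2)^2/(4L_m)$. Because $\Lambda$ is $2\times 2$ and nonnegative, Perron--Frobenius identifies $\rho(\Lambda)$ with the largest real eigenvalue, and a convenient criterion is the pair of conditions $\Lambda_{ii}<1$ for $i=1,2$ together with the determinantal inequality $(1-\Lambda_{11})(1-\Lambda_{22})>\Lambda_{12}\Lambda_{21}$.

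First I would dispatch the diagonal bounds. The entry $\Lambda_{22}=(1+\sigma_2)\sigma_2/2$ is strictly below $1$ because $\sigma_2\in(0,1)$ by Assumption~\ref{weight}. For $\Lambda_{11}=\Lambda_{22}+4L_m^2(1+\sigma_2)\alpha^2/(1-\sigma_2)$, I use the identity $1-\Lambda_{22}=(1-\sigma_2)(2+\sigma_2)/2$; substituting the bound $\alpha^2<(1-\sigma_2)^4/(16L_m^2)$ implied by the hypothesis shows the second term is at most $(1-\sigma_2)^3(1+\sigma_2)/4$, and an elementary cubic-versus-linear comparison on $(0,1)$ confirms this stays below $(1-\sigma_2)(2+\sigma_2)/2$, giving $\Lambda_{11}<1$.

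Second and most delicate, I would establish the determinantal inequality. After substituting all four entries, clearing the factors of $1/(1-\sigma_2)$ in the denominators, and collecting terms, the inequality is equivalent to
\[
(1-\sigma_2)^4(2+\sigma_2)^2 \;>\; 8L_m^2(1+\sigma_2)\alpha^2\bigl[(1-\sigma_2)^2(2+\sigma_2)+(1+\sigma_2)\sigma_2^2\bigr].
\]
Plugging in the hypothesised bound on $\alpha^2$ makes the factor $L_m^2$ cancel, leaving the purely algebraic assertion $2(2+\sigma_2)^2>(1+\sigma_2)[(1-\sigma_2)^2(2+\sigma_2)+(1+\sigma_2)\sigma_2^2]$, which I would verify by expanding both sides as polynomials in $\sigma_2$ and showing the difference is positive throughout $(0,1)$ (the left side grows quadratically while the bracket on the right is bounded). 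The main obstacle I anticipate is algebraic rather than conceptual: the ratio $(1+\sigma_2)/(1-\sigma_2)$ appears in three different entries of $\Lambda$ and blows up near $\sigma_2=1$, so the cancellations produced by the $(1-\sigma_2)^2$ in the step-size bound must be tracked carefully to see that the hypothesis is indeed tight enough. Once $\rho(\Lambda)<1$ is in hand, stability in the sense of the theorem follows immediately from the Neumann-series representation of $(I-\Lambda)^{-1}$.
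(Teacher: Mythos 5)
Your proposal is correct, and it reaches the same conclusion by a different certificate for $\rho(\Lambda)<1$ than the paper uses. The paper's proof exhibits a positive test vector: it writes the two componentwise conditions $\Lambda y\leq y$, makes the ansatz $y_2=8\tfrac{1+\sigma_2}{1-\sigma_2}\alpha^2L_m^2\,y_1$, and extracts the step-size bound ``with some inequality relaxation,'' leaving the decisive algebra implicit. You instead invoke the leading-principal-minor (M-matrix) criterion $1-\Lambda_{11}>0$, $1-\Lambda_{22}>0$, $(1-\Lambda_{11})(1-\Lambda_{22})>\Lambda_{12}\Lambda_{21}$, which for a nonnegative $2\times2$ matrix is equivalent to $\rho(\Lambda)<1$; this is legitimate and, in fact, your determinant computation is exactly the quantity $\det(I-\Lambda)$ that the paper only evaluates later, in Corollary~1, under the stronger restriction $\alpha\leq(1-\sigma_2)^2/(16L_m)$. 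Your reduction is accurate: using $1-\Lambda_{22}=\tfrac{(1-\sigma_2)(2+\sigma_2)}{2}$ and clearing denominators does give the displayed inequality, and after substituting $\alpha^2<(1-\sigma_2)^4/(16L_m^2)$ the claim reduces to $2(2+\sigma_2)^2>(1+\sigma_2)\bigl[(1-\sigma_2)^2(2+\sigma_2)+(1+\sigma_2)\sigma_2^2\bigr]$, whose two sides expand to $8+8\sigma_2+2\sigma_2^2$ and $2-\sigma_2-2\sigma_2^2+3\sigma_2^3+2\sigma_2^4$ respectively, with difference $6+9\sigma_2+4\sigma_2^2-3\sigma_2^3-2\sigma_2^4>0$ on $(0,1)$ since $3\sigma_2^3+2\sigma_2^4<5$; so the step you deferred does close. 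The net effect is that your route makes the tightness of the threshold $\alpha<(1-\sigma_2)^2/(4L_m)$ fully explicit and checkable, at the cost of a heavier polynomial expansion, whereas the paper's test-vector argument is shorter on the page but hides the same algebra behind an unstated relaxation.
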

\begin{proof}
    In order to ensure that the linear system is stable, i.e. $\rho(\Lambda) < 1$, if and only if there exists
    \begin{equation}
    \begin{aligned}
        \left(\frac{(1+\sigma_2)\sigma_2}{2} + 4 L_m^2 \frac{1+\sigma_2}{1-\sigma_2}\alpha^2\right) y_1 + 2 \frac{1+\sigma_2}{1-\sigma_2}\sigma_2^2 y_2 & \leq y_1 \\
        \frac{1+\sigma_2}{1-\sigma_2}\alpha^2 L_m^2 y_1 + \frac{(1+\sigma_2)\sigma_2}{2} y_2 & \leq y_2 .
    \end{aligned}
    \end{equation}
    Therefore, we have
    \begin{equation}
    \begin{aligned}
        & 2 \frac{1+\sigma_2}{1-\sigma_2}\sigma_2^2 y_2 \leq\left(\frac{(1-\sigma_2)(\sigma_2+2)}{2} - 4 L_m^2 \frac{1+\sigma_2}{1-\sigma_2}\alpha^2\right) y_1 \\
        & 2 \frac{(1+\sigma_2)^2}{(1-\sigma_2)(\sigma_2+2)}\alpha^2 L_m^2 y_1 \leq y_2 .
    \end{aligned}
    \end{equation}
    Let $y_2 = 8 \frac{1+\sigma_2}{1-\sigma_2} \alpha^2 L_m^2 y_1$. With some inequality relaxation, we obtain the safety step size $\alpha$ such that
    \begin{equation}
        \alpha \leq \frac{(1-\sigma_2)^2}{4 L_m}.
    \end{equation}
    The proof is completed.
\end{proof}

Theorem \ref{stable} means that under the safety step size, the linear system on $\mathbf{x}$ and $\mathbf{s}$ is stable. Based on this, we can bound the accumulated consensus error on $\Vert \mathbf{x}_k - \bar{\mathbf{x}}_k\Vert^2$.

\begin{corollary}
\label{x}
    Under a safety step size satisfying $\alpha \leq \frac{(1-\sigma_2)^2}{16 L_m}$, the following boundedness holds:
    \[
    \begin{aligned}
        & \sum_{k=0}^K \Vert \mathbf{x}_k - \bar{\mathbf{x}}_k\Vert^2 \leq 32\frac{1+\sigma_2}{(1-\sigma_2)^3} \alpha^4 L_m^2 \sum_{k=0}^K \Vert \bar{\mathbf{s}}_{k} \Vert^2  \\
        & + 8\frac{1+\sigma_2}{(1-\sigma_2)^3} \alpha^2 \Vert \mathbf{s}_0 - \bar{\mathbf{s}}_0\Vert^2 .
    \end{aligned}
    \]
\end{corollary}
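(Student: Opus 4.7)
The plan is to treat the two consensus-error sums $A_K := \sum_{k=0}^K \Vert \mathbf{x}_k - \bar{\mathbf{x}}_k \Vert^2$ and $B_K := \sum_{k=0}^K \Vert \mathbf{s}_k - \bar{\mathbf{s}}_k \Vert^2$ as a coupled two-variable linear inequality system. Summing Lemma 1 and Lemma 2 over $k$ produces recursive bounds that couple $A_K$ and $B_K$; eliminating $B_K$ then leaves a self-referential inequality on $A_K$ that can be inverted under the stated safety step size.

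First, I would sum the bound of Lemma 1 from $k=1$ to $K$ and use the index shift $\sum_{k=1}^K \Vert \mathbf{x}_{k-1} - \bar{\mathbf{x}}_{k-1} \Vert^2 \leq A_K$ together with the analogous one for $\mathbf{s}$. Moving the $\tfrac{(1+\sigma_2)\sigma_2}{2} A_K$ term to the left yields
\begin{equation*}
\left(1 - \tfrac{(1+\sigma_2)\sigma_2}{2}\right) A_K \leq \tfrac{1+\sigma_2}{1-\sigma_2}\alpha^2 B_K + \Vert \mathbf{x}_0 - \bar{\mathbf{x}}_0 \Vert^2,
\end{equation*}
and since $1 - \tfrac{(1+\sigma_2)\sigma_2}{2} = \tfrac{(2+\sigma_2)(1-\sigma_2)}{2} \geq 1-\sigma_2$, this becomes an affine bound on $A_K$ in terms of $B_K$. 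Applying the same summation trick to Lemma 2 yields
\begin{equation*}
(1 - \rho_s) B_K \leq 2 L_m^2 \tfrac{(1+\sigma_2)\sigma_2^2}{1-\sigma_2} A_K + 4 L_m^2 \tfrac{1+\sigma_2}{1-\sigma_2}\alpha^2 \sum_{k=0}^K \Vert \bar{\mathbf{s}}_k \Vert^2 + \Vert \mathbf{s}_0 - \bar{\mathbf{s}}_0 \Vert^2,
\end{equation*}
where $\rho_s := \tfrac{(1+\sigma_2)\sigma_2}{2} + 4 L_m^2 \tfrac{1+\sigma_2}{1-\sigma_2}\alpha^2$. Substituting this bound on $B_K$ into the affine bound on $A_K$ gives a self-referential inequality of the form $A_K \leq \theta A_K + \mathcal{F}$, where $\theta$ is the self-coupling coefficient and $\mathcal{F}$ collects the $\Vert \mathbf{s}_0 - \bar{\mathbf{s}}_0 \Vert^2$ initial term, the $\sum_k \Vert \bar{\mathbf{s}}_k \Vert^2$ forcing term, and the initial $\Vert \mathbf{x}_0 - \bar{\mathbf{x}}_0 \Vert^2$ --- the last of which vanishes under the standard gradient-tracking initialization $x_{i,0} = x_0$ for all $i$, consistent with the absence of such a term in the statement of the corollary.

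The main obstacle is the constant-tracking that justifies the stricter safety step size $\alpha \leq (1-\sigma_2)^2 / (16 L_m)$, which is sharper than the one in Theorem \ref{stable} precisely because we need $1-\theta$ bounded \emph{away} from zero by a fixed positive fraction rather than merely positive. Plugging the step-size bound into $\rho_s$ gives $4 L_m^2 \tfrac{1+\sigma_2}{1-\sigma_2}\alpha^2 \leq \tfrac{(1+\sigma_2)(1-\sigma_2)^3}{64}$, which is dominated by $1 - \tfrac{(1+\sigma_2)\sigma_2}{2}$ and yields $1 - \rho_s \geq \tfrac{1-\sigma_2}{2}$; an analogous estimate together with $(1+\sigma_2)^2 \sigma_2^2 \leq 4$ gives $\theta \leq \tfrac{1}{2}$. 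Absorption then produces $A_K \leq 2\mathcal{F}$, and unpacking the coefficients --- one factor of $(1-\sigma_2)^{-1}$ from $1 - \tfrac{(1+\sigma_2)\sigma_2}{2} \geq 1-\sigma_2$, one from $1-\rho_s \geq \tfrac{1-\sigma_2}{2}$, and a third from the $\tfrac{1+\sigma_2}{1-\sigma_2}\alpha^2$ factor supplied by Lemma 1 --- yields the claimed $(1-\sigma_2)^{-3}$ scaling. Routine use of $(1+\sigma_2) \leq 2$ and $\sigma_2 \leq 1$ then collapses the numerical constants into the prefactors $8$ and $32$ displayed in the corollary.
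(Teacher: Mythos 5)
Your route---sum the recursions of Lemma 1 and Lemma 2 over $k$, move the diagonal self-coupling terms to the left, and eliminate $B_K$---is the same argument the paper makes, merely carried out by scalar elimination rather than by summing the vector recursion $\Phi_k \leq \Lambda\Phi_{k-1}+\Upsilon_{k-1}$ and inverting $I-\Lambda$ through its adjugate and the determinant bound $\det(I-\Lambda)\geq (1-\sigma_2)^2/8$. The structure is sound: the affine bound $A_K \leq \tfrac{1+\sigma_2}{(1-\sigma_2)^2}\alpha^2 B_K$, the bound $1-\rho_s \geq \tfrac{1-\sigma_2}{2}$, and the absorption with $\theta \leq \tfrac12$ all check out under $\alpha \leq \tfrac{(1-\sigma_2)^2}{16L_m}$, and your coefficient on the initial term, $\tfrac{4(1+\sigma_2)}{(1-\sigma_2)^3}\alpha^2$, is within the stated $8\tfrac{1+\sigma_2}{(1-\sigma_2)^3}\alpha^2$.

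The one step that does not close is the constant on the forcing term. Your three-factor accounting of $(1-\sigma_2)^{-1}$ omits the factor $\tfrac{1+\sigma_2}{1-\sigma_2}$ that the forcing term $4L_m^2\tfrac{1+\sigma_2}{1-\sigma_2}\alpha^2\Vert\bar{\mathbf{s}}_{k-1}\Vert^2$ of Lemma 2 carries on its own. Tracing your chain, the coefficient of $\sum_{k}\Vert\bar{\mathbf{s}}_k\Vert^2$ comes out as $16\tfrac{(1+\sigma_2)^2}{(1-\sigma_2)^4}\alpha^4L_m^2$, and no ``routine use of $1+\sigma_2\leq 2$'' can trade a $(1-\sigma_2)^{-4}$ for the stated $(1-\sigma_2)^{-3}$: the two differ by $\tfrac{1+\sigma_2}{2(1-\sigma_2)}$, which exceeds $1$ once $\sigma_2>1/3$. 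To be fair, the paper's own proof has exactly the same defect---its penultimate display produces $8\tfrac{1+\sigma_2}{(1-\sigma_2)^3}\alpha^2L_m^2\cdot 4L_m\tfrac{1+\sigma_2}{1-\sigma_2}\alpha^2 = 32\tfrac{(1+\sigma_2)^2}{(1-\sigma_2)^4}\alpha^4L_m^3$ on $\sum_k\Vert\bar{\mathbf{s}}_k\Vert^2$, and the final line silently drops the extra $\tfrac{1+\sigma_2}{1-\sigma_2}\geq 1$. So your derivation is exactly as valid as the paper's; the honest conclusion is that the corollary's first coefficient should read $(1+\sigma_2)^2(1-\sigma_2)^{-4}$, a change that is immaterial downstream since Theorem \ref{converge} only uses the orders in $\alpha$ and $K$.
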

\begin{proof}
    We first compute
    \begin{equation}
        \mathit{I}-\Lambda = \begin{bmatrix} 1-\frac{(1+\sigma_2)\sigma_2}{2} - 4 \frac{1+\sigma_2}{1-\sigma_2}\alpha^2 L_m^2 & -2 \frac{1+\sigma_2}{1-\sigma_2}\sigma_2^2 \\ -\frac{1+\sigma_2}{1-\sigma_2}\alpha^2 L_m^2 & 1-\frac{(1+\sigma_2)\sigma_2}{2}
        \end{bmatrix} .
    \end{equation}
    Then, the determinant of $\mathit{I}-\Lambda$ yields
    \begin{equation}
    \begin{aligned}
        & \det(\mathit{I}-\Lambda) \\
        & = \left(1-\frac{(1+\sigma_2)\sigma_2}{2} - 4 L_m^2 \frac{1+\sigma_2}{1-\sigma_2}\alpha^2\right) \frac{(1-\sigma_2)(\sigma_2+2)}{2} \\
        & - 2 \frac{(1+\sigma_2)^2}{(1-\sigma_2)^2} \sigma_2^2 \alpha^2 L_m^2 .
    \end{aligned}
    \end{equation}
    Since $\alpha \leq \frac{(1-\sigma_2)^2}{16 L_m}$, we can give the lower bound: $\det(\mathit{I}-\Lambda) \geq \frac{(1-\sigma_2)^2}{8}$. Therefore, we have
    \begin{equation}
    \begin{aligned}
        & (\mathit{I}-\Lambda)^{-1} = \frac{(\mathit{I}-\Lambda)^*}{\det(\mathit{I}-\Lambda)} \\
        & \leq \frac{8}{(1-\sigma_2)^2} \begin{bmatrix} 1-\frac{(1+\sigma_2)\sigma_2}{2} & 2 \frac{1+\sigma_2}{1-\sigma_2}\sigma_2^2 \\ \frac{1+\sigma_2}{1-\sigma_2}\alpha^2 L_m^2 & 1-\frac{(1+\sigma_2)\sigma_2}{2} - 4 \frac{1+\sigma_2}{1-\sigma_2}\alpha^2 L_m^2
        \end{bmatrix} \\
        & = \begin{bmatrix} 4\frac{\sigma_2+2}{1-\sigma_2} & 16\frac{1+\sigma_2}{(1-\sigma_2)^3} \sigma_2^2 \\ 8\frac{1+\sigma_2}{(1-\sigma_2)^3}\alpha^2 L_m^2 & 4\frac{\sigma_2+2}{1-\sigma_2} - 32\frac{1+\sigma_2}{(1-\sigma_2)^3} \alpha^2 L_m^2
        \end{bmatrix} .
    \end{aligned}
    \end{equation}
    On the other hand, it follows from the linear system $\Phi_k \leq \Lambda \Phi_{k-1} + \Upsilon_{k-1}$ that $\Phi_k \leq \Lambda^k \Phi_{0} + \sum_{t=0}^k \Lambda^t \Upsilon_{k-t-1}$. Therefore, we have
    \begin{equation}
    \begin{aligned}
        \sum_{k=0}^K \Phi_k & \leq \sum_{k=0}^K \Lambda^k \Phi_{0} + \sum_{k=0}^K\sum_{t=0}^k \Lambda^t \Upsilon_{k-t-1} \\
        & \leq \sum_{k=0}^{\infty} \Lambda^k \Phi_{0} + \sum_{k=0}^K\sum_{t=0}^{\infty} \Lambda^t \Upsilon_{k-t-1} \\
        & = (\mathit{I}-\Lambda)^{-1} \left(\Phi_{0}+\sum_{k=0}^K \Upsilon_{k}\right) .
    \end{aligned}
    \end{equation}
    Furthermore, it follows from $\Vert \mathbf{x}_0 - \bar{\mathbf{x}}_0\Vert^2=0$ that
    \begin{equation}
    \begin{aligned}
        & \sum_{k=0}^K L_m \Vert \mathbf{x}_k - \bar{\mathbf{x}}_k\Vert^2 \leq  8\frac{1+\sigma_2}{(1-\sigma_2)^3}\alpha^2 L_m^2 \frac{\Vert \mathbf{s}_0 - \bar{\mathbf{s}}_0\Vert^2}{L_m} \\
        & +  8\frac{1+\sigma_2}{(1-\sigma_2)^3}\alpha^2 L_m^2 \sum_{k=0}^K 4 L_m \frac{1+\sigma_2}{1-\sigma_2}\alpha^2 \Vert \bar{\mathbf{s}}_{k} \Vert^2 .
    \end{aligned}
    \end{equation}
    Consequently, we have
    \begin{equation}
    \begin{aligned}
        & \sum_{k=0}^K \Vert \mathbf{x}_k - \bar{\mathbf{x}}_k\Vert^2 \leq 32\frac{1+\sigma_2}{(1-\sigma_2)^3} \alpha^4 L_m^2 \sum_{k=0}^K \Vert \bar{\mathbf{s}}_{k} \Vert^2  \\
        & + 8\frac{1+\sigma_2}{(1-\sigma_2)^3} \alpha^2 \Vert \mathbf{s}_0 - \bar{\mathbf{s}}_0\Vert^2 .
    \end{aligned}
    \end{equation}
    The proof is completed.
\end{proof}

\subsection{Convergence Analysis}

By utilizing the Lipschitz-type inequalities on the quantized region, we can show the following descent lemma on $f$.

\begin{lemma}
\label{iteration}
    Under Assumption \ref{weight}, \ref{lipschitz} and \ref{unbias}, suppose $\alpha \leq \frac{2}{L_m}$. It follows that
    \begin{equation}
    \begin{aligned}
        & \mathbb{E}[f(\bar{x}_k)] \leq f(\bar{x}_{k-1}) - \frac{\alpha}{2}  \Vert\operatorname{grad} f(\bar{x}_{k-1}) \Vert^2 \\
        & + \frac{\alpha^2 L_m}{2} \mathbb{E}[\Vert \bar{s}_{k-1} \Vert^2] + \frac{L_m}{n} \Vert \mathbf{x}_{k-1} - \bar{\mathbf{x}}_{k-1} \Vert^2 .
    \end{aligned}
    \end{equation}
\end{lemma}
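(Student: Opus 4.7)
The plan is to derive the descent inequality by applying a Lipschitz-smoothness-type inequality (with constant $L_m$) to the averaged iterates $\bar{x}_{k-1}$ and $\bar{x}_k$, and then to convert the resulting cross term into a squared gradient norm plus a consensus-error term. Because $W$ is doubly stochastic, averaging the per-agent update $x_{i,k}=\sum_j W_{ij}^t x_{j,k-1}-\alpha s_{i,k-1}$ over $i$ yields $\bar{x}_k=\bar{x}_{k-1}-\alpha\bar{s}_{k-1}$, and averaging the tracking update together with the standard initialization $s_{i,0}=\Gamma_{i,0}$ gives the invariance $\bar{s}_k=\bar{\Gamma}_k$. By the unbiasedness in Assumption~\ref{unbias} (applied conditionally on the iterates) and linearity of expectation, it then follows that $\mathbb{E}[\bar{s}_{k-1}]=\frac{1}{n}\sum_i \operatorname{grad} f_i(x_{i,k-1})$.

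With these identities in hand, the Lipschitz-type descent inequality applied to the pair $\bar{x}_{k-1},\bar{x}_k$ produces
\[
    f(\bar{x}_k)\leq f(\bar{x}_{k-1}) - \alpha\langle\operatorname{grad} f(\bar{x}_{k-1}),\bar{s}_{k-1}\rangle + \frac{\alpha^2 L_m}{2}\Vert\bar{s}_{k-1}\Vert^2.
\]
Taking expectation and invoking the elementary identity $-\langle a,b\rangle\leq -\tfrac{1}{2}\Vert a\Vert^2+\tfrac{1}{2}\Vert a-b\Vert^2$ with $a=\operatorname{grad} f(\bar{x}_{k-1})$ and $b=\mathbb{E}[\bar{s}_{k-1}]$ converts the cross term into $-\tfrac{\alpha}{2}\Vert\operatorname{grad} f(\bar{x}_{k-1})\Vert^2$ plus a bias term $\tfrac{\alpha}{2}\Vert\operatorname{grad} f(\bar{x}_{k-1})-\mathbb{E}[\bar{s}_{k-1}]\Vert^2$. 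Since $\operatorname{grad} f=\frac{1}{n}\sum_i \operatorname{grad} f_i$, Jensen's inequality together with the $L_m$-Lipschitz continuity of each $\operatorname{grad} f_i$ bounds this bias term by $\tfrac{\alpha L_m^2}{2n}\Vert\mathbf{x}_{k-1}-\bar{\mathbf{x}}_{k-1}\Vert^2$. Imposing $\alpha\leq 2/L_m$ then lets me absorb $\alpha L_m^2/2$ into $L_m$, producing the stated coefficient $L_m/n$.

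The main obstacle is justifying the first step: the Lipschitz-smoothness bound of Proposition~\ref{pro1} is established only for $x,y\in\mathcal{M}$, whereas $\bar{x}_{k-1}$ and $\bar{x}_k$ merely lie in the neighborhood $\mathcal{M}^{\Omega(N)}$. To handle this I would rework the argument of Proposition~\ref{pro1}, replacing $L_g$ by the combined constant $L_m=\max(L_g,\max_{x\in\mathcal{M}^{\Omega(N)}}\Vert\nabla f(x)\Vert)$ from~(\ref{lm}), which by construction dominates both the Euclidean smoothness term and the normal-component bound arising from proximal smoothness on the larger set. A secondary subtlety is the definition of $\operatorname{grad} f_i$ at points of $\mathcal{M}^{\Omega(N)}$ that do not lie on $\mathcal{M}$; this is handled by the unique closest-point projection available thanks to proximal smoothness, which makes $\operatorname{grad} f_i(\bar{x}_{k-1})$ well-defined and preserves the $L_m$-Lipschitz bound used in the Jensen step.
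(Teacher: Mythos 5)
Your proposal is correct and follows essentially the same route as the paper: apply the $L_m$-smoothness inequality to $\bar{x}_k=\bar{x}_{k-1}-\alpha\bar{s}_{k-1}$, use unbiasedness of the quantizer to identify $\mathbb{E}[\bar{s}_{k-1}]$ with an average of Riemannian gradients, split the cross term into $-\tfrac{\alpha}{2}\Vert\operatorname{grad} f(\bar{x}_{k-1})\Vert^2$ plus a bias, and control the bias by Jensen plus Lipschitz continuity to obtain the $\tfrac{L_m}{n}\Vert\mathbf{x}_{k-1}-\bar{\mathbf{x}}_{k-1}\Vert^2$ term under $\alpha\le 2/L_m$. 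The only cosmetic difference is that the paper inserts $\Gamma(\bar{x}_{k-1})$ and applies Young's inequality with weight $\tfrac{\alpha^2 L_m}{4}$ on the gradient side, whereas you apply the polarization bound directly to $\mathbb{E}[\bar{s}_{k-1}]$; both yield the stated constants, and your explicit attention to extending Proposition~\ref{pro1} from $\mathcal{M}$ to $\mathcal{M}^{\Omega(N)}$ addresses a point the paper's own proof leaves implicit.
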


\begin{proof}
By invoking Proposition~\ref{pro1} and Eq.(\ref{lm}), we have
\begin{equation}
\begin{aligned}
    & \mathbb{E}[f(\bar{x}_k)] - f(\bar{x}_{k-1}) \\
    & \leq \langle \operatorname{grad} f(\bar{x}_{k-1}), \mathbb{E}[\bar{x}_{k}-\bar{x}_{k-1}] \rangle + \frac{L_m}{2} \mathbb{E}[\Vert \bar{x}_{k}-\bar{x}_{k-1} \Vert^2] \\
    & = - \alpha \langle \operatorname{grad} f(\bar{x}_{k-1}), \mathbb{E}[\bar{s}_{k-1}] \rangle + \frac{\alpha^2 L_m}{2} \mathbb{E}[\Vert \bar{s}_{k-1} \Vert^2] \\
    & = - \alpha \langle \operatorname{grad} f(\bar{x}_{k-1}), \mathbb{E}[\Gamma(\bar{x}_{k-1})] \rangle + \frac{\alpha^2 L_m}{2} \mathbb{E}[\Vert \bar{s}_{k-1} \Vert^2] \\
    & + \alpha \langle \operatorname{grad} f(\bar{x}_{k-1}), \mathbb{E}[\Gamma(\bar{x}_{k-1}) - \bar{s}_{k-1}] \rangle \\
    & = - \alpha  \Vert\operatorname{grad} f(\bar{x}_{k-1}) \Vert^2 + \frac{\alpha^2 L_m}{2} \mathbb{E}[\Vert \bar{s}_{k-1} \Vert^2] \\
    & + \alpha \langle \operatorname{grad} f(\bar{x}_{k-1}), \mathbb{E}[\Gamma(\bar{x}_{k-1})-\bar{s}_{k-1}] \rangle \\
    & \leq - \alpha  \Vert\operatorname{grad} f(\bar{x}_{k-1}) \Vert^2 + \frac{\alpha^2 L_m}{2} \mathbb{E}[\Vert \bar{s}_{k-1} \Vert^2] \\
    & + \frac{\alpha^2 L_m}{4} \Vert \operatorname{grad} f(\bar{x}_{k-1}) \Vert^2 + \frac{1}{L_m} \Vert\mathbb{E}[\bar{s}_{k-1} - \Gamma(\bar{x}_{k-1})] \Vert^2 \\
    & \leq - \frac{\alpha}{2}  \Vert\operatorname{grad} f(\bar{x}_{k-1}) \Vert^2 + \frac{\alpha^2 L_m}{2} \mathbb{E}[\Vert \bar{s}_{k-1} \Vert^2] \\
    & + \frac{1}{L_m} \Vert\mathbb{E}[\bar{s}_{k-1} - \Gamma(\bar{x}_{k-1})] \Vert^2 ,
\end{aligned}
\end{equation}
where the third equality uses $\mathbb{E}[\Gamma(\bar{x}_{k-1})]=\operatorname{grad} f(\bar{x}_{k-1})$ based on Assumption~\ref{unbias}, the second inequality utilizes Young's inequality, and the third inequality uses $\alpha < \frac{2}{L_m}$. It follows from Jensen's inequality that
\begin{equation}
\begin{aligned}
    & \Vert\mathbb{E}[\bar{s}_{k-1} - \Gamma(\bar{x}_{k-1})] \Vert^2 \leq \mathbb{E}[\Vert\bar{s}_{k-1} - \Gamma(\bar{x}_{k-1}) \Vert^2] \\
    & \leq \frac{L^2_m}{n} \Vert \mathbf{x}_{k-1} - \mathbf{1}\otimes \bar{x}_{k-1} \Vert^2 = \frac{L^2_m}{n} \Vert \mathbf{x}_{k-1} - \bar{\mathbf{x}}_{k-1} \Vert^2 .
\end{aligned}
\end{equation}
The proof is completed.
\end{proof}

With the Lemma \ref{iteration}, we can present the following two theorems, which establish an $\mathcal{O}(1/K)$ convergence rate in Algorithm \ref{alg2}.

\begin{theorem}
\label{converge}
    Suppose that Theorem~\ref{stable} and Lemma~\ref{iteration} hold. If step size $\alpha < \min \left\{\frac{1}{8L_m},\frac{(1-\sigma_2)^2}{16 L_m},\frac{1}{16L_m}\sqrt{\frac{n(1-\sigma_2)^3}{2L_m^2+1}}\right\}$ is satisfied, we get the convergence rate:
    \begin{equation}
        \min_{k=0,\cdots,K} \Vert\mathbb{E}[\Gamma(\bar{x}_{k})] \Vert^2 = \mathcal{O} \left(\frac{1}{\alpha K}\right)
    \end{equation}
\end{theorem}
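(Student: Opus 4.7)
The plan is to convert the per-iteration descent inequality of Lemma~\ref{iteration} into a telescoping sum and then absorb the two "error" terms (gradient tracking variance $\|\bar s_k\|^2$ and consensus error $\|\mathbf{x}_k-\bar{\mathbf{x}}_k\|^2$) back into the negative $\|\operatorname{grad} f(\bar x_k)\|^2$ term, using Corollary~\ref{x} and the fact that the quantizer is unbiased. First, I would sum Lemma~\ref{iteration} over $k=1,\ldots,K$ and telescope the $f(\bar x_k)$ differences, using a lower bound $f^\star:=\inf f$, to obtain
\begin{equation*}
\tfrac{\alpha}{2}\sum_{k=0}^{K-1}\|\operatorname{grad} f(\bar x_k)\|^2
\;\le\; f(\bar x_0)-f^\star
+ \tfrac{\alpha^2 L_m}{2}\sum_{k=0}^{K-1}\mathbb{E}\|\bar s_k\|^2
+ \tfrac{L_m}{n}\sum_{k=0}^{K-1}\|\mathbf{x}_k-\bar{\mathbf{x}}_k\|^2.
\end{equation*}

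Second, I would translate $\mathbb{E}\|\bar s_k\|^2$ into something controllable by $\|\operatorname{grad} f(\bar x_k)\|^2$. Averaging the $\mathbf{s}$-update in \eqref{matrix} and using double stochasticity of $\mathbf W$ gives $\bar s_{k+1}=\bar s_k+\bar\Gamma_{k+1}-\bar\Gamma_k$, so with the standard initialization $s_{i,0}=\Gamma_{i,0}$ one has $\bar s_k=\bar\Gamma_k$ exactly. Unbiasedness (Assumption~\ref{unbias}) gives $\mathbb{E}[\bar s_k]=\tfrac{1}{n}\sum_i\operatorname{grad} f_i(x_{i,k})$ and $\mathbb{E}\|\bar s_k-\mathbb{E}[\bar s_k]\|^2\le \nu^2/n$ by independence across nodes. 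Combining these with the Lipschitz bound $\|\operatorname{grad} f_i(x_{i,k})-\operatorname{grad} f_i(\bar x_k)\|\le L_m\|x_{i,k}-\bar x_k\|$ from Proposition~\ref{pro1} and Young's inequality yields
\begin{equation*}
\mathbb{E}\|\bar s_k\|^2 \;\le\; 2\|\operatorname{grad} f(\bar x_k)\|^2 + \tfrac{2L_m^2}{n}\|\mathbf{x}_k-\bar{\mathbf{x}}_k\|^2 + \tfrac{2\nu^2}{n}.
\end{equation*}

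Third, I would invoke Corollary~\ref{x} to control $\sum\|\mathbf{x}_k-\bar{\mathbf{x}}_k\|^2$. Using $\|\bar{\mathbf{s}}_k\|^2=n\|\bar s_k\|^2$ and the bound just derived, one can express the consensus sum itself in terms of $\sum\|\operatorname{grad} f(\bar x_k)\|^2$, a constant times $K\nu^2$, and initial-condition constants. Substituting both bounds into the telescoped inequality produces a relation of the schematic form
\begin{equation*}
\tfrac{\alpha}{2}\sum_k\|\operatorname{grad} f(\bar x_k)\|^2 \;\le\; \Delta_0 + \bigl(c_1\alpha^2 L_m + c_2 \tfrac{\alpha^4 L_m^3}{(1-\sigma_2)^3}\bigr)\sum_k\|\operatorname{grad} f(\bar x_k)\|^2 + \bigl(\text{terms of size }\alpha^2 K\nu^2 + \text{initial}\bigr),
\end{equation*}
where $\Delta_0:=f(\bar x_0)-f^\star$. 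The three step-size thresholds in the hypothesis are precisely what is needed to make the coefficient of the absorbing term at most $\alpha/4$: $\alpha\le 1/(8L_m)$ handles the $\alpha^2 L_m$ factor from $\bar s_k$; $\alpha\le (1-\sigma_2)^2/(16L_m)$ licenses Corollary~\ref{x}; and the final threshold $\alpha\le \tfrac{1}{16L_m}\sqrt{n(1-\sigma_2)^3/(2L_m^2+1)}$ controls the $L_m/n$-weighted consensus contribution involving the $\alpha^4 L_m^3$ factor. After absorbing, one gets $\tfrac{\alpha}{4}\sum_k\|\operatorname{grad} f(\bar x_k)\|^2\le \Delta_0+O(1)$, so
\begin{equation*}
\min_{0\le k\le K-1}\|\operatorname{grad} f(\bar x_k)\|^2 \;\le\; \tfrac{1}{K}\sum_k\|\operatorname{grad} f(\bar x_k)\|^2 \;=\; \mathcal{O}\!\left(\tfrac{1}{\alpha K}\right),
\end{equation*}
which, via $\mathbb{E}[\Gamma(\bar x_k)]=\operatorname{grad} f(\bar x_k)$, is the claimed rate.

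The main obstacle will be the third step: the bookkeeping must keep track of four parameters ($L_m$, $\sigma_2$, $n$, $\nu$) simultaneously, and in particular the $\alpha^4 L_m^3/(1-\sigma_2)^3$ coefficient arising from plugging the quadratic-in-$\|\operatorname{grad} f\|$ bound on $\|\bar s_k\|^2$ into Corollary~\ref{x} must be kept strictly below $\alpha/4$ after multiplication by $L_m/n$; this is exactly the origin of the $\sqrt{n(1-\sigma_2)^3/(2L_m^2+1)}$ factor in the step-size condition, and getting the constants to line up rather than verifying only the decay rate is where care is needed.
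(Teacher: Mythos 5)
Your skeleton matches the paper's proof almost exactly: telescope Lemma~\ref{iteration}, bound $\mathbb{E}\Vert\bar s_k\Vert^2$ by $2\Vert\operatorname{grad} f(\bar x_k)\Vert^2$ plus a consensus term, feed $\sum_k\Vert\mathbf{x}_k-\bar{\mathbf{x}}_k\Vert^2$ through Corollary~\ref{x}, and use the three step-size thresholds to absorb everything into the negative gradient term (the paper's only cosmetic difference is that it splits $\tfrac{\alpha^2L_m}{2}\mathbb{E}\Vert\bar s_k\Vert^2$ so as to retain a negative $-\tfrac{\alpha^2L_m}{2}\sum_k\Vert\bar s_k\Vert^2$ that directly cancels the $\sum_k\Vert\bar{\mathbf{s}}_k\Vert^2$ term produced by Corollary~\ref{x}, rather than re-substituting the quadratic bound a second time as you do; both bookkeeping choices work).

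However, there is one step in your chain that does not deliver the theorem as stated: the additive variance term $\tfrac{2\nu^2}{n}$ you retain in
\begin{equation*}
\mathbb{E}\Vert\bar s_k\Vert^2 \;\le\; 2\Vert\operatorname{grad} f(\bar x_k)\Vert^2 + \tfrac{2L_m^2}{n}\Vert\mathbf{x}_k-\bar{\mathbf{x}}_k\Vert^2 + \tfrac{2\nu^2}{n}.
\end{equation*}
Summed over $k=0,\dots,K$ and multiplied by $\tfrac{\alpha^2L_m}{2}$, this contributes a term of order $\alpha^2 L_m K\nu^2/n$ to the right-hand side; it is \emph{not} $\mathcal{O}(1)$ in $K$, and after dividing by $\tfrac{\alpha}{4}K$ it leaves a non-vanishing floor of order $\alpha L_m\nu^2/n$. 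Your derivation therefore proves $\min_k\Vert\operatorname{grad} f(\bar x_k)\Vert^2 = \mathcal{O}(1/(\alpha K)) + \mathcal{O}(\alpha\nu^2)$ rather than the claimed $\mathcal{O}(1/(\alpha K))$. The paper avoids this by never introducing an additive $\nu^2$: it bounds $\mathbb{E}[\Vert\bar s_{k-1}-\Gamma(\bar x_{k-1})\Vert^2]\le \tfrac{L_m^2}{n}\Vert\mathbf{x}_{k-1}-\bar{\mathbf{x}}_{k-1}\Vert^2$ by treating the quantized gradient map $\Gamma$ itself as $L_\Gamma$-Lipschitz with $L_\Gamma\le L_m$ (Eq.~(\ref{lm}) and the surrounding discussion), so the quantization error is folded entirely into the consensus term. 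To reach the stated conclusion you must either adopt that device or otherwise argue that the per-iteration quantization variance is controlled by $\Vert\mathbf{x}_k-\bar{\mathbf{x}}_k\Vert^2$; with the bound as you wrote it, the $K\nu^2$ accumulation cannot be absorbed by any choice of constant step size.
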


\begin{proof}
According to $\Vert \bar{s}_{k-1} \Vert^2=\Vert \bar{s}_{k-1} - \Gamma(\bar{x}_{k-1}) + \Gamma(\bar{x}_{k-1}) \Vert^2$, we have
\begin{equation}
\begin{aligned}
    & \mathbb{E}[\Vert \bar{s}_{k-1} \Vert^2]=\mathbb{E}[ \Vert \bar{s}_{k-1} - \Gamma(\bar{x}_{k-1}) + \Gamma(\bar{x}_{k-1}) \Vert^2] \\
    & \leq 2\Vert \operatorname{grad} f(\bar{x}_{k-1}) \Vert^2 + 2\mathbb{E}[\Vert \bar{s}_{k-1} - \Gamma(\bar{x}_{k-1}) \Vert^2] \\
    & \leq 2\Vert \operatorname{grad} f(\bar{x}_{k-1}) \Vert^2 + \frac{2L^2_m}{n} \Vert \mathbf{x}_{k-1} - \bar{\mathbf{x}}_{k-1} \Vert^2 .
\end{aligned}
\end{equation}
It follows from Lemma~\ref{iteration} that
\begin{equation}
\begin{aligned}
    & \mathbb{E}[f(\bar{x}_k)] \\
    & \leq f(\bar{x}_{k-1}) - \frac{\alpha}{2}  \Vert\operatorname{grad} f(\bar{x}_{k-1}) \Vert^2 - \frac{\alpha^2 L_m}{2} \mathbb{E}[\Vert \bar{s}_{k-1} \Vert^2] \\
    & + 2\alpha^2 L_m \Vert \operatorname{grad} f(\bar{x}_{k-1}) \Vert^2 + \frac{2\alpha^2 L^3_m}{n} \Vert \mathbf{x}_{k-1} - \bar{\mathbf{x}}_{k-1} \Vert^2\\
    & + \frac{L_m}{n} \Vert \mathbf{x}_{k-1} - \bar{\mathbf{x}}_{k-1} \Vert^2 \\
    & \leq f(\bar{x}_{k-1}) - \frac{\alpha}{4}  \Vert\operatorname{grad} f(\bar{x}_{k-1}) \Vert^2 - \frac{\alpha^2 L_m}{2} \mathbb{E}[\Vert \bar{s}_{k-1} \Vert^2] \\
    & + \frac{2\alpha^2 L^3_m + L_m}{n} \Vert \mathbf{x}_{k-1} - \bar{\mathbf{x}}_{k-1} \Vert^2 ,
\end{aligned}
\end{equation}
where the second inequality uses $\alpha < \frac{1}{8L_m}$. By rearranging the above inequality, we have
\begin{equation}
\begin{aligned}
    & \frac{\alpha}{4}  \Vert\mathbb{E}[\Gamma(\bar{x}_{k-1})] \Vert^2 \leq f(\bar{x}_{k-1}) - \mathbb{E}[f(\bar{x}_k)] \\
    & - \frac{\alpha^2 L_m}{2} \mathbb{E}[\Vert \bar{s}_{k-1} \Vert^2] + \frac{2\alpha^2 L^3_m + L_m}{n} \Vert \mathbf{x}_{k-1} - \bar{\mathbf{x}}_{k-1} \Vert^2 .
\end{aligned}
\end{equation}
Summing both sides and taking the expectation give us
\begin{equation}
\begin{aligned}
    & \sum_{k=0}^K \frac{\alpha}{4}  \Vert\mathbb{E}[\Gamma(\bar{x}_{k})] \Vert^2 \\
    & \leq f(\bar{x}_{0}) - f^*  - \frac{\alpha^2 L_m}{2} \sum_{k=0}^K \Vert \bar{s}_{k} \Vert^2 \\
    & + \frac{2\alpha^2 L^3_m + L_m}{n} \sum_{k=0}^K\Vert \mathbf{x}_{k} - \bar{\mathbf{x}}_{k} \Vert^2 \\
    & \leq f(\bar{x}_{0}) - f^*  - \frac{\alpha^2 L_m}{2} \sum_{k=0}^K \Vert \bar{s}_{k} \Vert^2 \\
    & + \frac{2\alpha^2 L^3_m + L_m}{n}\frac{1+\sigma_2}{(1-\sigma_2)^3} 32 \alpha^4 L_m^2 \sum_{k=0}^K \Vert \bar{\mathbf{s}}_{k} \Vert^2 \\
    & + \frac{2\alpha^2 L^3_m + L_m}{n}\frac{1+\sigma_2}{(1-\sigma_2)^3} 8 \alpha^2 \Vert \mathbf{s}_0 - \bar{\mathbf{s}}_0\Vert^2 \\
    & \leq f(\bar{x}_{0}) - f^*  + \frac{16\alpha^2(2\alpha^2 L^3_m + L_m)}{n(1-\sigma_2)^3} \Vert \mathbf{s}_0 - \bar{\mathbf{s}}_0\Vert^2 \\
    & - \left(\frac{\alpha^2 L_m}{2} - \frac{64 \alpha^4 L_m^2(2\alpha^2 L^3_m + L_m)}{n(1-\sigma_2)^3} \right)\sum_{k=0}^K \Vert \bar{s}_{k} \Vert^2 ,
\end{aligned}
\end{equation}
where $f^*=\min_{x \in \operatorname{St}(d,r)} f(x)$ and the second inequality introduces Corollary~\ref{x}. Since $\alpha \leq \frac{1}{16L_m}\sqrt{\frac{n(1-\sigma_2)^3}{2L_m^2+1}}$ such that $\frac{\alpha^2 L_m}{2} - \frac{64 \alpha^4 L_m^2(2\alpha^2 L^3_m + L_m)}{n(1-\sigma_2)^3} \geq 0$, we have
\begin{equation}
    \sum_{k=0}^K \frac{\alpha}{4}  \Vert\mathbb{E}[\Gamma(\bar{x}_{k})] \Vert^2 \leq f(\bar{x}_{0}) - f^*  + \frac{16\alpha^2(2\alpha^2 L^3_m + L_m)}{n(1-\sigma_2)^3} \Vert \mathbf{s}_0 - \bar{\mathbf{s}}_0\Vert^2 .
\end{equation}
This implies
\begin{equation}
    \min_{k=0,\cdots,K} \Vert\mathbb{E}[\Gamma(\bar{x}_{k})] \Vert^2 \leq \frac{4(f(\bar{x}_{0}) - f^* + C_1)}{\alpha K},
\end{equation}
where $C_1=\frac{16\alpha^2(2\alpha^2 L^3_m + L_m)}{n(1-\sigma_2)^3}\Vert \mathbf{s}_0 - \bar{\mathbf{s}}_0\Vert^2$. The proof is completed.
\end{proof}

\begin{corollary}
    Suppose that the conditions for Theorem~\ref{converge} is satisfied. If $\alpha \leq \sqrt[4]{n(1-\sigma_2)^3}\frac{1}{16L_m}$, then the consensus is bounded with
    \begin{equation}
        \min_{k=0,\cdots,K} \mathbb{E}[\Vert \mathbf{x}_k - \bar{\mathbf{x}}_k\Vert^2]= \mathcal{O} \left(\frac{1}{K}\right).
    \end{equation}
\end{corollary}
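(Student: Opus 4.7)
The plan is to piggyback on the descent inequality derived inside the proof of Theorem~\ref{converge} and feed its byproduct into Corollary~\ref{x}. The key observation is that the proof of Theorem~\ref{converge} actually establishes a \emph{stronger} inequality than the one used for the gradient rate: before dropping the $\sum_k \Vert \bar{s}_k\Vert^2$ term, one has
\begin{equation*}
\sum_{k=0}^{K} \tfrac{\alpha}{4} \Vert \mathbb{E}[\Gamma(\bar{x}_k)]\Vert^2 + \Bigl( \tfrac{\alpha^2 L_m}{2} - \tfrac{64 \alpha^4 L_m^2(2\alpha^2 L_m^3 + L_m)}{n(1-\sigma_2)^3} \Bigr) \sum_{k=0}^{K} \Vert \bar{s}_k\Vert^2 \leq f(\bar{x}_0) - f^* + C_1 .
\end{equation*}
So the first step is to keep this bracketed coefficient on the left and exploit the sharper step-size condition $\alpha \leq \sqrt[4]{n(1-\sigma_2)^3}/(16L_m)$ to show that it is not merely non-negative but in fact $\Theta(\alpha^2)$. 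Factoring out $\alpha^2 L_m$ reduces the verification to checking that $\tfrac{128 \alpha^4 L_m^4 + 64\alpha^2 L_m^2}{n(1-\sigma_2)^3} \leq \tfrac{1}{4}$ (or some fixed fraction less than $\tfrac{1}{2}$); substituting the cap on $\alpha$ bounds each summand by a small absolute constant, so the coefficient is $\geq c \alpha^2 L_m$ for some $c>0$.

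Once the coefficient is certified to be $\Theta(\alpha^2)$, the above display immediately yields the $K$-free estimate $\sum_{k=0}^{K} \Vert \bar{s}_k\Vert^2 = \mathcal{O}(1/\alpha^2)$. The second step is then a direct substitution into Corollary~\ref{x},
\begin{equation*}
\sum_{k=0}^{K} \Vert \mathbf{x}_k - \bar{\mathbf{x}}_k\Vert^2 \leq \tfrac{32(1+\sigma_2)}{(1-\sigma_2)^3} \alpha^4 L_m^2 \sum_{k=0}^{K} \Vert \bar{\mathbf{s}}_k\Vert^2 + \tfrac{8(1+\sigma_2)}{(1-\sigma_2)^3} \alpha^2 \Vert \mathbf{s}_0 - \bar{\mathbf{s}}_0\Vert^2 ,
\end{equation*}
where the key cancellation $\alpha^4 \cdot \mathcal{O}(1/\alpha^2) = \mathcal{O}(\alpha^2)$ makes the whole right-hand side a constant that does not depend on $K$. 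Dividing by $K+1$ and bounding the minimum by the average gives
\begin{equation*}
\min_{0\leq k \leq K} \mathbb{E}[\Vert \mathbf{x}_k - \bar{\mathbf{x}}_k\Vert^2] \leq \tfrac{1}{K+1} \sum_{k=0}^{K} \mathbb{E}[\Vert \mathbf{x}_k - \bar{\mathbf{x}}_k\Vert^2] = \mathcal{O}(1/K) .
\end{equation*}

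The only non-mechanical obstacle is the arithmetic in the first step: one must confirm that the new step-size cap is tight enough to guarantee $\Theta(\alpha^2)$ (not merely positive) for the coefficient of $\sum_k \Vert \bar{s}_k\Vert^2$, because a weaker lower bound such as $\Theta(\alpha^4)$ would degrade the summable $\bar s$ bound to $\mathcal{O}(1/\alpha^4)$ and destroy the cancellation with the $\alpha^4$ prefactor in Corollary~\ref{x}. Everything else is plug-and-chug, and the final $\mathcal{O}(1/K)$ consensus rate follows by dividing the $K$-independent bound by $K$.
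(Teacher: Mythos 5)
Your proof is correct, but it routes the argument through a different intermediate bound than the paper does, and the comparison is worth recording. The paper starts from Corollary~\ref{x}, then re-expands $\Vert\bar{s}_k\Vert^2 \leq 2\Vert\Gamma(\bar{x}_k)\Vert^2 + 2\Vert\bar{s}_k-\Gamma(\bar{x}_k)\Vert^2$, bounds the second term by $\tfrac{L_m^2}{n}\Vert\mathbf{x}_k-\bar{\mathbf{x}}_k\Vert^2$, and absorbs the resulting $\sum_k\Vert\mathbf{x}_k-\bar{\mathbf{x}}_k\Vert^2$ back into the left-hand side --- this absorption is precisely where the extra hypothesis $\alpha\leq\sqrt[4]{n(1-\sigma_2)^3}\tfrac{1}{16L_m}$ is consumed, via $64\tfrac{1+\sigma_2}{n(1-\sigma_2)^3}\alpha^4L_m^4\leq\tfrac12$ --- and then it invokes the $\mathcal{O}(1/\alpha)$ bound on $\sum_k\Vert\mathbb{E}[\Gamma(\bar{x}_k)]\Vert^2$ from Theorem~\ref{converge}. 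You instead keep the coefficient of $\sum_k\Vert\bar{s}_k\Vert^2$ in the telescoped descent inequality, certify it is at least $\tfrac{\alpha^2 L_m}{4}$ under the step sizes already required by Theorem~\ref{converge}, extract $\sum_k\Vert\bar{s}_k\Vert^2=\mathcal{O}(1/\alpha^2)$, and substitute once into Corollary~\ref{x}; the $\alpha^4\cdot\mathcal{O}(1/\alpha^2)$ cancellation then gives the same $K$-independent bound. Your version is shorter, avoids the second self-bounding/absorption pass, and notably does not use the corollary's additional step-size hypothesis at all, which shows that hypothesis is redundant under your decomposition. Two harmless caveats, both inherited from the paper rather than introduced by you: your lower bound of $\Theta(\alpha^2)$ on the coefficient (like the paper's nonnegativity claim in Theorem~\ref{converge}) implicitly needs something like $\alpha\leq 1$ so that $2\alpha^2L_m^2+1\leq 2L_m^2+1$, and the paper's notation conflates $\Vert\bar{\mathbf{s}}_k\Vert^2=n\Vert\bar{s}_k\Vert^2$ with $\Vert\bar{s}_k\Vert^2$, which affects only constants, not the $\mathcal{O}(1/K)$ rate.
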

\begin{proof}
It follows from Corollary~\ref{x} that
\begin{equation}
    \begin{aligned}
        & \sum_{k=0}^K \Vert \mathbf{x}_k - \bar{\mathbf{x}}_k\Vert^2 \leq 32\frac{1+\sigma_2 }{(1-\sigma_2)^3} \alpha^4 L_m^2 \sum_{k=0}^K \Vert \bar{s}_{k} \Vert^2  \\
        & + 8\frac{1+\sigma_2}{(1-\sigma_2)^3} \alpha^2 \Vert \mathbf{s}_0 - \bar{\mathbf{s}}_0\Vert^2 \\
        & \leq 8\frac{1+\sigma_2}{(1-\sigma_2)^3} \alpha^2 \Vert \mathbf{s}_0 - \bar{\mathbf{s}}_0\Vert^2 \\
        & + 64\frac{1+\sigma_2 }{(1-\sigma_2)^3} \alpha^4 L_m^2 \sum_{k=0}^K \Vert \Gamma(\bar{x}_{k}) \Vert^2 \\
        & + 64\frac{1+\sigma_2 }{(1-\sigma_2)^3} \alpha^4 L_m^2 \sum_{k=0}^K \Vert \bar{s}_{k} - \Gamma(\bar{x}_{k}) \Vert^2 \\
        & \leq 8\frac{1+\sigma_2}{(1-\sigma_2)^3} \alpha^2 \Vert \mathbf{s}_0 - \bar{\mathbf{s}}_0\Vert^2 \\
        & + 64\frac{1+\sigma_2 }{(1-\sigma_2)^3} \alpha^4 L_m^2 \sum_{k=0}^K \Vert \Gamma(\bar{x}_{k}) \Vert^2 \\
        & + 64\frac{1+\sigma_2 }{n(1-\sigma_2)^3} \alpha^4 L_m^4 \sum_{k=0}^K \Vert \mathbf{x}_k - \bar{\mathbf{x}}_k \Vert^2 .
    \end{aligned}
\end{equation}
Since we have $\alpha \leq \sqrt[4]{n(1-\sigma_2)^3}\frac{1}{16L_m}$, it follows from $64\frac{1+\sigma_2 }{n(1-\sigma_2)^3} \alpha^4 L_m^4 \leq \frac{1}{2}$ that
\begin{equation}
    \begin{aligned}
        & \sum_{k=0}^K \mathbb{E}[\Vert \mathbf{x}_k - \bar{\mathbf{x}}_k\Vert^2] \leq 16\frac{1+\sigma_2}{(1-\sigma_2)^3} \alpha^2 \Vert \mathbf{s}_0 - \bar{\mathbf{s}}_0\Vert^2 \\
        & + 128\frac{1+\sigma_2 }{(1-\sigma_2)^3} \alpha^4 L_m^2 \sum_{k=0}^K \Vert \mathbb{E}[\Gamma(\bar{x}_{k})] \Vert^2 \\
        & \leq 4C_2(f(\bar{x}_{0}) - f^* + C_1)+ C_3 ,
    \end{aligned}
\end{equation}
where $C_2=128\frac{1+\sigma_2 }{(1-\sigma_2)^3} \alpha^3 L_m^2$ and $C_3=16\frac{1+\sigma_2}{(1-\sigma_2)^3} \alpha^2 \Vert \mathbf{s}_0 - \bar{\mathbf{s}}_0\Vert^2$. This implies
\begin{equation}
    \min_{k=0,\cdots,K} \mathbb{E}[\Vert \mathbf{x}_k - \bar{\mathbf{x}}_k\Vert^2]= \mathcal{O} \left(\frac{1}{K}\right).
\end{equation}
The proof is completed.
\end{proof}

\section{Experiments}

\begin{figure*}[ht]
	\centering
	\begin{minipage}{0.44\linewidth}
		\centering
		\includegraphics[width=1\linewidth]{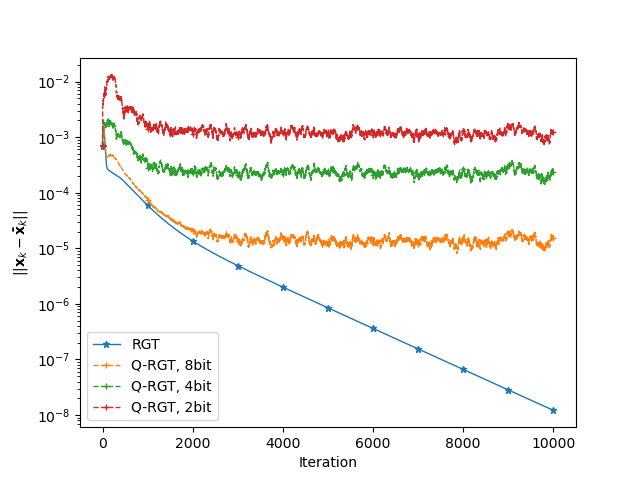}
	\end{minipage}
	\centering
	\begin{minipage}{0.44\linewidth}
		\centering
		\includegraphics[width=1\linewidth]{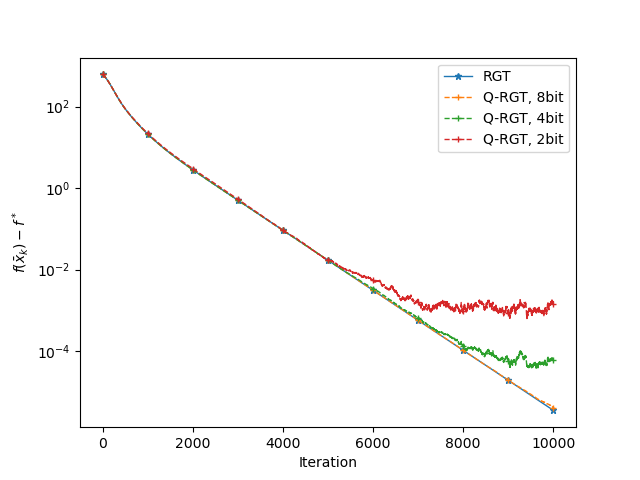}
	\end{minipage}
        \centering
	\begin{minipage}{0.44\linewidth}
		\centering
		\includegraphics[width=1\linewidth]{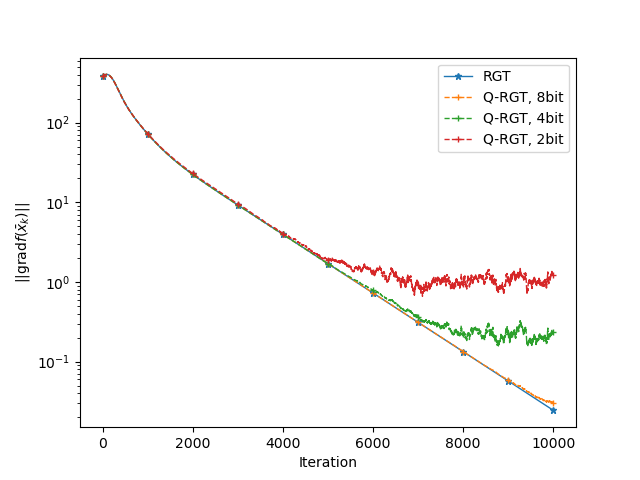}
	\end{minipage}
	\centering
	\begin{minipage}{0.44\linewidth}
		\centering
		\includegraphics[width=1\linewidth]{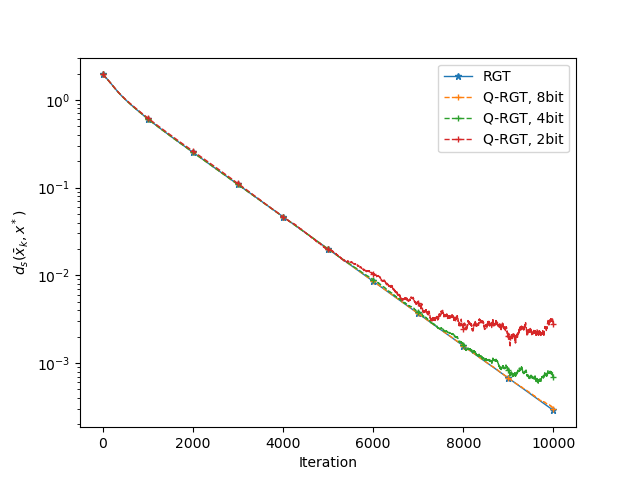}
	\end{minipage}
	\caption{Numerical results on synthetic data with different bit-widths and single-step consensus, eigengap $\Delta = 0.8$, Graph: Ring, $n=16$, $\hat{\alpha}=0.01$.}
	\label{fig1}
\end{figure*}

\begin{figure*}[ht]
	\centering
	\begin{minipage}{0.44\linewidth}
		\centering
		\includegraphics[width=1\linewidth]{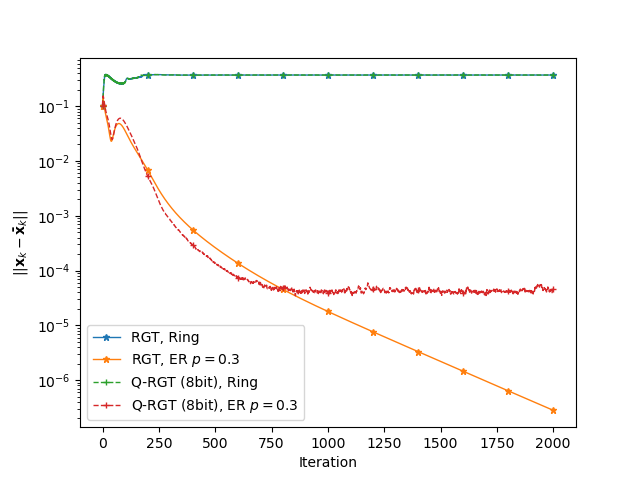}
	\end{minipage}
	\centering
	\begin{minipage}{0.44\linewidth}
		\centering
		\includegraphics[width=1\linewidth]{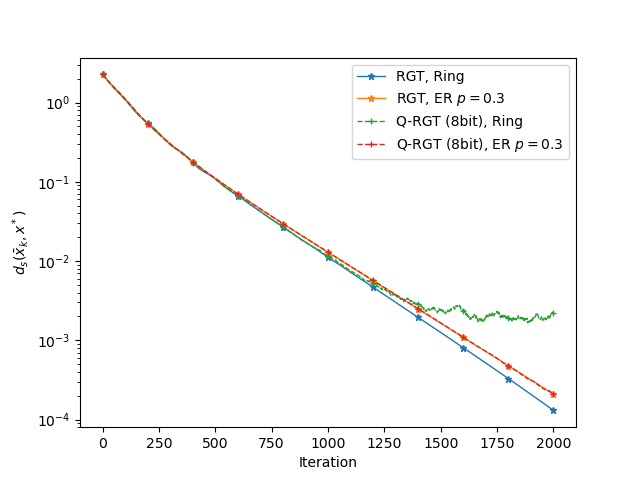}
	\end{minipage}
	\caption{Numerical results on MNIST data with different network graphs and single-step consensus, eigengap $\Delta = 0.8$, $n=16$, $\hat{\alpha}=0.01$.}
	\label{fig2}
\end{figure*}

In this section, we compare our Q-RGT method with RGT~\cite{chen2021decentralized}. We evaluate their performance on the distributed eigenvector problem given by:
\begin{equation}
\begin{aligned}
& \min _{\mathbf{x} \in \operatorname{St}^n}-\frac{1}{2 n} \sum_{i=1}^n \operatorname{tr}\left(x_i^{\top} A_i^{\top} A_i x_i\right), \quad \text { s.t. } \; x_1=x_2=\ldots=x_n ,\\
& \text{where} \quad \operatorname{St}^n:=\underbrace{\operatorname{St}(d, r) \times \operatorname{St}(d, r) \times \cdots \times \operatorname{St}(d, r)}_n .
\label{pca}
\end{aligned}
\end{equation}
Here, $A_i \in \mathbb{R}^{m_i \times d}$ represents the local data matrix, and $m_i$ denotes the sample size for agent $i$. Building upon this, $A^\top:=[A_1^\top, \cdots, A_n^\top]$ denotes the global data matrix. For any solution $x^*$ of Eq.(\ref{pca}), which provides  an orthogonal matrix $O \in \mathbb{R}^{r \times r}$, $x^* O$ also qualifies as a solution in essence. Consequently, the distance between two points $x$ and $x^*$ can be defined as
\begin{equation}
d_s(x,x^*)=\min_{O^\top O=OO^\top=\emph{I}_r} \Vert x O - x^*\Vert.
\end{equation}

We measure these algorithms using four metrics: the consensus error ``$ \Vert \mathbf{x}_k - \mathbf{\bar{x}}_k \Vert$", the gradient norm ``$\Vert \mathrm{grad} f( {\bar{x}}_k) \Vert$", the difference of the objective function ``$f( {\bar{x}}_k) - f^*$", and the distance to the global optimum ``$d_s({\bar{x}}_k, x^*)$". According to \cite{chen2021decentralized}, $W$ is the Metroplis constant matrix~\cite{shi2015extra}. The experiments are conducted using the Intel(R) Core(TM) i7-14700K CPU, and the code is implemented in Python under the package of mpi4py.

\subsection{Synthetic data}

We run $n=16$ and set $m_1=m_2=\cdots=m_{16}=1000$, $d=10$, and $r=5$. Subsequently, we generate $m_1 \times n$ i.i.d samples to obtain $A$ by following standard multi-variate Gaussian distribution. Specifically, let $A= \mathbb{U} \Sigma \mathbb{V}^\top$ represent the truncated SVD, where $\mathbb{V} \in \mathbb{R}^{d \times d}$ and $\mathbb{U} \in \mathbb{R}^{1000n \times d}$ are orthogonal matrices, and $\Sigma \in \mathbb{R}^{d \times d}$ is a diagonal matrix. Finally, we define the singular values of $A$ as $\Sigma_{i,i}=\Sigma_{0,0} \times \Delta^{i/2}$, where $i \in [d]$ and the eigengap satisfies $\Delta \in (0,1)$. We use a constant step size for all comparisons. The step size is defined as $\alpha=\frac{n \hat{\alpha}}{\sum_{i=1}^n m_i}$. We evaluate the graph matrix representing the topology among agents (Ring network). For RGT and Q-RGT, we cap the maximum epoch at 10,000, terminating early if $d_s(\bar{x}_k,x^*) \leq 10^{-8}$.

We conduct ablation experiments on Q-RGT using different bit-widths (2bit, 4bit, 8bit) in a Ring network of 16 agents. The single-step consensus $t=1$ is set, and the step size is $\hat{\alpha}=0.01$. As shown in Figure \ref{fig1}, larger bit-widths consistently results in better performance, which aligns with intuition. This indicates that bit-width affects the lower bound of convergence, which is consistent with our definition of the quantized region. Notably, with an 8bit quantizer, Q-RGT achieves performance comparable to RGT, except for the metric ``$ \Vert \mathbf{x}_k - \mathbf{\bar{x}}_k \Vert$". Even with a 2bit quantizer, our algorithm can still work. Importantly, before reaching the lower bound of convergence, Q-RGT maintains the same convergence rate as RGT, which is consistent with our theoretical analysis.

\subsection{Real-world data}

We present numerical results on the MNIST dataset~\cite{lecun1998mnist}. The MNIST dataset consists of 60,000 hand-written images, each represented as a vector of dimension $d=784$. These samples form a data matrix of size $60000 \times 784$, which is randomly and evenly partitioned among $n$ agents. The data matrix is normalized by dividing 255. Consequently, each agent holds a local data matrix $A_i$ of $\frac{60000}{n} \times 784$. For brevity, we fix $d=784$ and $r=5$. The communication topology among agents is modeled both the Ring network and the Erdos-Renyi (ER) network with probability $p=0.3$. The step size is defined as $\alpha_k=\frac{\hat{\alpha}}{60000}$. For RGT and Q-RGT, we cap the maximum epoch at 2,000, terminating early if $d_s(\bar{x}_k,x^*) \leq 10^{-8}$.

Figure~\ref{fig2} represents the results for MNIST data with $n = 16$. We observe that Q-RGT maintains the same convergence rate as RGT before reaching the lower bound of convergence.

\section{Conclusion}

This paper proposes the Quantized Riemanian Gradient Tracking (Q-RGT) algorithm for solving distributed optimization problems under orthogonality constraints. Notably, this is the first distributed Riemannian optimization algorithm to incorporate quantization. We demonstrate that Q-RGT achieves a convergence rate of $\mathcal{O}(1/K)$, matching that of RGT without quantization. Numerical experiments on eigenvalue problems are presented to validate our throretical analysis.

\bibliographystyle{IEEEtran}
\bibliography{sample}

\end{document}